\mathchardef\dashmod="2D
\renewcommand{\ker}{\mathrm{ker}}
\newcommand{\ext}{\mathrm{Ext}}
\renewcommand{\mod}{\mathbf{Mod}}
\newcommand{\st}{\mathbf{St}}
\newcommand{\un}{\mathbb{S}}
\newcommand{\End}{\mathrm{End}}
\newcommand{\Pic}{\mathrm{Pic}}
\newcommand{\can}{\iota}
\newcommand{\Z}{\mathbb{Z}}
\newcommand{\F}{\mathbb{F}}
\newcommand{\A}{\mathcal{A}}
\renewcommand{\hom}{{\sf hom}}
\renewcommand{\L}{\mathcal{L}}
\newcommand{\zero}{{\bf 0}}
\newcommand{\sur}{/ \hspace{-0.07cm}/}
\theoremstyle{definition}
\newtheorem{de}{Definition}[section]
\newtheorem*{conv*}{Convention} %new
\theoremstyle{plain}
\newtheorem{thm}[de]{Theorem}
\newtheorem{lemma}[de]{Lemma}
\newtheorem{pro}[de]{Proposition}
\newtheorem*{thm*}{Theorem}
\newtheorem*{lemma*}{Lemma}
\newtheorem*{pro*}{Proposition}
\newtheorem*{cor*}{Corollary}
\newtheorem*{prob*}{Problem}
\newtheorem*{clm*}{Claim}
\theoremstyle{remark}
\newtheorem{rk}[de]{Remark}
\title{The stable Picard group of $\A(2)$}
\author{Prasit Bhattacharya, Nicolas Ricka}
\address{Department of Mathematics, University of Notre Dame \\ 106 Hayes-Healy Hall,\\ \\
Notre Dame, IN 46556, USA}
 \address{Department of Mathematics, Wayne State University \\
 Detroit, MI 48202}
\email{prasbhat@indiana.edu}
\email{nicolas.ricka@wayne.edu}
\keywords{Stable category of modules, Picard group, Steenrod algebra}
\subjclass[2010]{55S10,55P42,19L41}
\begin{document}
\maketitle
\begin{abstract}
Using a form of descent in the stable category of $\A(2)$-modules, we show that there are no exotic elements in the stable Picard group of $\A(2)$, \textit{i.e.} that the stable Picard group of $\A(2)$ is free on $2$ generators.
\end{abstract}

\section*{Acknowledgments}
Authors would like to thank Bob Bruner for some fruitful conversations.

\begin{conv*} Through out this paper, $\F$ will denote the field with two elements. Every algebraic structure is implicitly over the base field $\F$, and tensor products are taken over $\F$. The Hopf algebras under consideration in this paper are connected, cocommutative finite dimensional graded Hopf algebras, unless explicitly specified otherwise. 
\end{conv*} 
\section{Introduction}

Let $A$ be a Hopf algebra. The \emph{Picard group} of $\st(A)$, denoted by $\Pic(A)$ is the group of stably $\otimes$-invertible $A$-modules,
\[ \Pic(A):= \lbrace M \in \st(A): \text{$\exists$ $N$ such that $M \otimes N = \un$} \rbrace, \]
where $\un$ is the unit of the symmetric monoidal category $(\st(A),\otimes,\un)$. When $B \subset A$ be a Hopf subalgebra, the forgetful functor $U:\st(A) \rightarrow \st(B)$ being monoidal, it induces a group homomorphism $\Pic(U) : \Pic(A) \rightarrow \Pic(B).$ Define the \emph{relative Picard group} $\Pic(A,B)$ as the kernel $$\Pic(A,B) :=\ker (\Pic(U)).$$ 

Some elements are always in the Picard group of a Hopf algebra. Explicitly, there is a morphism of groups (see \eqref{eq:canonicalmap}):
\begin{equation*} 
\can : \Z \oplus \Z \rightarrow \Pic(A).
\end{equation*}
The interesting part of the Picard group consist in the the elements in the cokernel of $\can$. These are called \emph{exotic elements.}
In this paper, we are interested in the determination of the Picard group of $\A(2)$, the Hopf subalgebra of $\A$ generated by $Sq^1, Sq^2$ and $Sq^4$. This problem is very natural, as the continuation of the study presented in \cite{AP76}.
Let $\A(1)$ denote the Hopf subalgebra of the Steenrod algebra $\A$ generated by $Sq^1$ and $Sq^2$. Questions regarding Picard groups of hopf algebra started with \cite{AP76}, where the determination of the stable Picard group of the Hopf subalgebra $\A(1)$ is used to show the uniqueness of the infinite loop space structure on the classifying space of the infinite orthogonal group (see \textit{loc cit} for the definition of $\A(1)$). The connective real $K$-theory $\mathit{ko}$ is also related to $\A(1)$ as $H^*(\mathit{ko}, \F) \cong \A \sur \A(1)$ (see \cite{AP76}). Here, the result is quite surprising: the group homomorphism 
$$\can : \Z \oplus \Z \rightarrow \Pic(\A(1))$$ is not surjective, and there is exactly one exotic element, called the Joker (see \cite{AM74,BrOssa}). This $\A(1)$-module is pictured in figure \ref{fig:joker}. 
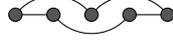
\begin{figure}[h]
\definecolor{qqqqff}{rgb}{0.3333333333333333,0.3333333333333333,0.3333333333333333}
\begin{tikzpicture}[line cap=round,line join=round,>=triangle 45,x=0.5cm,y=0.5cm]
\clip(5.82942940857475,5.953777759762861) rectangle (10.537605207779377,8.02281283886911);
\draw (6.,7.)-- (7.,7.);
\draw (9.,7.)-- (10.,7.);
\draw [shift={(7.,6.25)}] plot[domain=0.6435011087932844:2.498091544796509,variable=\t]({1.*1.25*cos(\t r)+0.*1.25*sin(\t r)},{0.*1.25*cos(\t r)+1.*1.25*sin(\t r)});
\draw [shift={(9.,6.25)}] plot[domain=0.6435011087932844:2.498091544796509,variable=\t]({1.*1.25*cos(\t r)+0.*1.25*sin(\t r)},{0.*1.25*cos(\t r)+1.*1.25*sin(\t r)});
\draw [shift={(8.,7.75)}] plot[domain=3.7850937623830774:5.639684198386302,variable=\t]({1.*1.25*cos(\t r)+0.*1.25*sin(\t r)},{0.*1.25*cos(\t r)+1.*1.25*sin(\t r)});
\begin{scriptsize}
\draw [fill=qqqqff] (6.,7.) circle (2.5pt);
\draw [fill=qqqqff] (7.,7.) circle (2.5pt);
\draw [fill=qqqqff] (8.,7.) circle (2.5pt);
\draw [fill=qqqqff] (9.,7.) circle (2.5pt);
\draw [fill=qqqqff] (10.,7.) circle (2.5pt);
\end{scriptsize}
\end{tikzpicture}
\caption{The joker. Each dot represents a copy of $\F$. Straight lines represent the action of $Sq^1$ and curved ones represent the action of $Sq^2$.} \label{fig:joker}
\end{figure}

Studying $\Pic(\A(2))$ is of topological interest for two reasons. Firstly, $\A(2)$ shares the same relationship with the spectrum of topological modular forms as the relationship between $\A(1)$ and connective real $K$-theory. Secondly, the Joker plays a role in the determination of the Picard group of the $K(1)$-local stable homotopy category \cite{HMS94}.

The main result of this paper shows there are no `surprises' for the stable Picard group of $\A(2)$, more precisely:
\begin{thm*} \ref{thm:picachu}
The morphism of groups
\begin{equation*}
\can : \Z \oplus \Z \rightarrow \Pic(\A(2)),
\end{equation*}
which sends $(n,m)$ to $\un^{n,m}$, is an isomorphism.
\end{thm*}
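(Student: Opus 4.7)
The plan is to analyze $\Pic(\A(2))$ via the forgetful functor $U : \st(\A(2)) \to \st(\A(1))$ and the Adams--Priddy calculation $\Pic(\A(1)) \cong \Z \oplus \Z \oplus \Z/2$ from \cite{AP76}, with the torsion summand generated by the Joker $J$. Injectivity of $\can$ is immediate: the pair $(n,m)$ is recovered from $\un^{n,m}$ by the bidegree of its unique non-zero cell, so only surjectivity requires work.

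The argument splits into two steps. First, I would show that $\Pic(U)\circ\can_{\A(2)}$ surjects onto the $\Z \oplus \Z$ summand of $\Pic(\A(1))$ and, crucially, that the Joker does not lift to $\Pic(\A(2))$. The non-lifting of $J$ is an obstruction-theoretic check: I would attempt to extend the $\A(1)$-action on the five-dimensional module $J$ by an action of $Sq^4$, and show via the Adem relations valid in $\A(2)$ that no such extension exists compatible with invertibility (equivalently, some Margolis-homology invariant of a hypothetical $\A(2)$-lift is forced to be inconsistent with $J$'s structure in $\st(\A(1))$). This constrains the image of $\Pic(U)$ inside the $\Z \oplus \Z$ summand.

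Second, I would prove that the relative Picard group $\Pic(\A(2),\A(1)) := \ker \Pic(U)$ is trivial. Here descent enters. An invertible $\A(2)$-module $M$ with $U(M) \steq \un$ carries a canonical descent datum along $\A(1) \hookrightarrow \A(2)$; I would organize these data into a cobar tower and the associated descent spectral sequence for $\pic$, whose $E_2$-term is the cohomology of the quotient Hopf algebra $\A(2)\sur\A(1)$ with coefficients in $\pi_\ast \pic(\st(\A(1)))$, and then show that every class contributing to $\pi_0 \pic(\st(\A(2)))$ vanishes. Combined with the first step, this forces $\can$ to be surjective and hence an isomorphism.

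The hard part is the descent computation. The low-degree homotopy of $\pic(\st(\A(1)))$ (units, self-equivalences, and the Picard group itself) must be identified, the relevant cohomology of $\A(2)\sur\A(1)$ --- a Hopf algebra of dimension $8$, hence in principle computable --- must be assembled, and every differential landing in total degree zero must be controlled. Eliminating exotic survivors of the spectral sequence, together with the explicit obstruction ruling out lifts of $J$, form the technical core where I expect most of the effort to go.
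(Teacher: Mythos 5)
Your overall skeleton (injectivity from bidegrees, then surjectivity via the image of the restriction map plus vanishing of a relative Picard group) is sound, but the descent step, which you yourself identify as the technical core, does not exist in the form you describe. The spectral sequence you invoke requires the inclusion to be a conormal (normal) Hopf-algebra extension, so that $\A(2)\sur\A(1)$ is a quotient \emph{Hopf algebra} whose cohomology can serve as an $E_2$-term. But $\A(1)$ is not normal in $\A(2)$: dually, for the surjection $\A(2)^{\ast}\rightarrow\A(1)^{\ast}$ the element $\xi_2^2$ is a left coinvariant but not a right coinvariant, since
\begin{equation*}
\Delta(\xi_2^2)=\xi_2^2\otimes 1+\xi_1^4\otimes\xi_1^2+1\otimes\xi_2^2
\end{equation*}
and $\xi_1^2\neq 0$ in $\A(1)^{\ast}$. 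Hence $\A(2)\sur\A(1)$ is only an $\A(2)$-module coalgebra, not a Hopf algebra, and the proposed $E_2$-term ``$H^{\ast}(\A(2)\sur\A(1);\pi_{\ast}\pic(\st(\A(1))))$'' is not defined; the machinery of \cite{Ric4} and \cite{MS14} does not apply along this inclusion. This is precisely why the paper interposes the chain $D(2)\subset C(2)\subset\A(2)$: each step is conormal with dual quotient exterior on a single generator $\tau$, and the vanishing of the relative Picard group then reduces (Lemma~\ref{lem:false}) to a nilpotence statement ($h_{11}^3=0$, resp.\ $h_{12}^3=0$) together with the degree inequality $(q+1)|\tau|<|B|$ --- a criterion that has no analogue for your $8$-dimensional, non-exterior quotient, where you would in addition need the full low-degree homotopy of $\pic(\st(\A(1)))$, including the exotic $\Z/2$, and control of many differentials you have not specified.

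Your first step is also miscast. Ruling out a compatible $Sq^4$-action on the five-dimensional representative of the Joker by Adem relations is neither sufficient nor, most likely, true: an invertible $\A(2)$-module restricting stably to $J$ need only agree with $J$ up to free $\A(1)$-summands, so a cell-level obstruction on the minimal model proves nothing about such lifts (and the Joker does in fact carry $\A(2)$-module structures, so the literal non-extension claim fails). What you actually need is that no stably invertible $\A(2)$-module restricts to a suspension of $J$; the natural route is Palmieri's criterion quoted in the paper (invertibility over $\A(2)$ is detected on the quasi-elementary subalgebras $E_1$ and $E_2$, i.e.\ by one-dimensionality of the relevant Margolis homologies), which requires a separate argument comparing the $Q_2$- and $P_2^1$-Margolis homologies of a hypothetical lift with those forced by $U(M)\steq J$. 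So as written there are two genuine gaps: the descent input along $\A(1)\subset\A(2)$ is unavailable, and the exclusion of the Joker from the image of $\Pic(U)$ is not established by the proposed obstruction.
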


The key idea in our approach is that we consider a chain of inclusions of Hopf subalgebras of $\A(2)$ (which we  define in section~\ref{sec:setup}),
\[D(2) \subset C(2) \subset \A(2)\]
of Hopf subalgebras of $\A(2)$. Our starting point is to compute $\Pic(D(2))$ by hand and show that (see Proposition~\ref{pro:picD2}) $\Pic(D(2))$ does not have any exotic elements.  The heuristic reason for considering $D(2)$ is that it is very close to being an exterior algebra (see Remark~\ref{rk:notext}) and $\Pic(E)$ for an exterior Hopf algebra is known to be free of exotic elements (see Proposition~\ref{pro:picexterior}). Then, by definition of the relative Picard group, we get a three-stage filtration of the stable Picard group of $\A(2)$.
\begin{equation*}
\xymatrix{
\Pic(\A(2)) \ar[r] &  \Pic(C(2)) \ar[r] &  \Pic(D(2)) \\
\Pic(\A(2),C(2)) \ar@{^(->}[u] &  \Pic(C(2),D(2)) \ar@{^(->}[u] &  \Pic(D(2)), \ar@{=}[u]
}.
\end{equation*}
where the vertical maps are the inclusion of the kernel of the next horizontal map. Next we use methods of homotopical descent along restriction functors as described in \cite{Ric4}. In Corollary~\ref{lem:false}, we give a criteria for general $B \subset A$ under which the relative Picard group $\Pic(A,B)$ is trivial. It follows that $\Pic(C(2),D(2))$ and $\Pic(\A(2),C(2))$ are trivial. Our main result, Theorem~\ref{thm:picachu}, is a straightforward consequence of the above observations.

In light of Corollary~\ref{lem:false}, it seems that the exotic element in the stable Picard group of $\A(1)$ (namely the Joker) is just a low-dimensional anomaly and authors expect the stable Picard group of $\A(n)$ not to have any exotic elements whenever $n \geq 2$\footnote{Eric Wofsey made similar conclusions in his unpublished work.}. Authors hope to address the case when $n>2$ in future.

\tableofcontents
\section{Set up} \label{sec:setup}

Let $A$ be a finite dimensional graded Hopf algebra. Let $\mod(A)$ be the category of bounded below graded $A$-modules. For a graded $A$-module $M$, we denote by $M_n$ the set of elements in degree $n$.  The diagonal $\Delta : A \rightarrow A \otimes A$ gives $\mod(A)$ the structure of a symmetric monoidal closed category. Explicitly, if $M,N$ are $A$-modules, the action of $A$ on $M \otimes N$ is defined by
\begin{eqnarray*}
A \otimes M \otimes N & \stackrel{\Delta \otimes M \otimes N}{\longrightarrow} & A \otimes A \otimes M \otimes N \\
& \stackrel{\simeq}{\rightarrow} & A \otimes M \otimes A \otimes N \\
& \rightarrow & M \otimes N,
\end{eqnarray*}
and the unit $\un$ for this monoidal structure is the $A$-module $\F$, concentrated in degree zero.

Recall from \cite[Theorem 12.5, Proposition 12.8]{Mar83} that the category $\mod(A)$ has enough injective and projective modules, and that the class of injective modules, projective modules, and free modules coincide. Let $\st(A)$ (see \cite[Example 2.4.(v)]{SS03} for an efficient and detailed definition) denote the stable category of graded $A$-modules. Objects of $\st(A)$ are the objects of $\mod(A)$, but the stable morphisms between two $A$-modules $M$ and $N$ is
 \begin{equation} \label{eq:stablemaps}
[M,N]^{A} := \frac{\mod(A)(M,N)}{<\lbrace f :M \rightarrow P \rightarrow N : P \text{ is a projective $A$-module} \rbrace >}. 
\end{equation}
In particular, free (respectively injective, projective, since these notion coincide) $A$-modules  becomes equivalent to $\zero$ in this category. It turns out (see \cite{Mar83}) that the structure of a closed monoidal category on $\mod(A)$ passes to $\st(A)$ . We will denote (abusively) the monoidal product in $\st(A)$ by $\otimes$, the unit by $\un_A$ and the hom-bifunctor by $F_A(-,-)$.

Additionally, the process of `killing the free modules' gives $\st(A)$ the structure of a triangulated category (see \cite[Section 9.6]{HPS}). We recall here the definition of the suspension functor, for completeness.
\begin{de} Choose a minimal projective resolution
\begin{equation*} 
 \dots \rightarrow P^{i+1} \rightarrow P^{i} \rightarrow \dots \rightarrow P^1 \rightarrow P^0 \rightarrow \un_A
\end{equation*}
of $\un_A$ as an $A$-module. For $n\geq 0$, let the $A$-module $\ker(P^{n+1} \rightarrow P^n)$ be denoted by $\un_A^{n,0}$, and the linear dual of $\un_A^{n,0}$ by $\un_A^{-n,0}$. We denote by $\un_A^{n,m}$ the $A$-module $\un_A^{n,0}[-m]$, where $[-m]$ is the shift in internal grading. Explicitly $(\un_A^{n,m})_k = (\un_A^{n,0})_{k+m}$.
 \end{de}
We can now define the suspension functor
\begin{equation*}
\Sigma^{n,m} : \st(A) \rightarrow \st(A)
\end{equation*}
which sends  $M \mapsto \un^{n,m}_A \otimes M$.

\begin{rk} 
Another choice of projective resolution of $\un_A$ would result in another definition of $\un^{n,m}_A$. However, since the resulting objects would be isomorphic to $\un^{n,m}_A$, the choice we made is harmless.
\end{rk}

\begin{de} \label{de:red}
We say that an $A$-module $M$ is reduced if it does not contain any $A$-free summand.
\end{de}
Let $M$ be an $A$-module. As observed in \cite{BrOssa}, one can construct a reduced $A$-module $M^{red}$, which is stably isomorphic to $M$.

To follow the common notations in algebraic topology, we make the convention 
\begin{equation*} \label{eqn:stableconvention} 
[M,N]_{s,t}^A := [\Sigma^{s,t}M,N]^A.
\end{equation*}
\begin{rk}
By \cite[Section 9.6]{HPS}, the suspension functor which is part of the triangulated structure is $\Sigma^{1,0}$. In particular, for $A$-modules $M$ and $N$, and $s \geq 0$, the bigraded extension groups are isomorphic to  
\begin{equation} \label{de:grading}
[M,N]_{-s,t}^A = [\Sigma^{-s,t}M,N]^A \cong \ext^{s,t}_A(M,N) . 
\end{equation} 
In particular when $s=0$, $\ext_A^{0,t}(M,N) = [M,N]_{0,t}^A$ is the set of stable maps, as described in \eqref{eq:stablemaps}. Let $\hom_A(M,N)$ denote the space of homomorphisms between $M$ and $N$. When $s \geq 1$, the vector space 
$$[M,N]_{s,t}^A \cong \pi_s(\hom_A(\Sigma^{0,t}M,N)).$$
The reader is referred to \cite[Proposition 14.1.8]{Mar83} for the explicit comparison.
\end{rk}

We now turn to the definition of our main object of interest: the Picard and relative Picard groups.
\begin{de}
Let $\Pic(A)$ be the group of stably $\otimes$-invertible $A$-modules,
\[ \Pic(A):= \lbrace M \in \st(A): \text{$\exists$ $N$ such that $M \otimes N = \un_A$} \rbrace. \] This group is called the \emph{Picard group} of $\st(A)$, and its elements are called \emph{Picard elements}.
\end{de}
\begin{rk} \label{rk:picardcompact}
Note that Picard elements share a lot of properties with the unit object (see \cite[Proposition 2.1.3]{MS14}). In particular, Picard elements have a finite dimensional representative. This justifies our restriction to bounded below modules. 
%Indeed, this is harmless to the study of Picard groups.
\end{rk}

When $B \subset A$ be a Hopf subalgebra, the forgetful functor $U:\st(A) \rightarrow \st(B)$ is monoidal. This induces a group homomorphism $\Pic(U) : \Pic(A) \rightarrow \Pic(B).$ 

\begin{de}
Let 
$$\Pic(A,B) :=\ker (\Pic(U)).$$
This group is called the \emph{relative Picard group} associated to the inclusion $B \subset A$.
\end{de}

Note that there is always a family of $\otimes$-invertible modules. Indeed, for all $(m,n) \in \Z \oplus \Z$, the $A$-module $\un^{n,m}_A$ has inverse $\un^{-n,-m}_A$. This defines a group homomorphism
\begin{equation} \label{eq:canonicalmap} 
\can : \Z \oplus \Z \rightarrow \Pic(A).
\end{equation}
\begin{de}
The elements in the cokernel of $\can$ are called \emph{exotic elements.}
\end{de}

Let $\A(2)$ be the Hopf subalgebra of the modulo $2$ Steenrod algebra $\A$ generated by the Steenrod squares $Sq^1,Sq^2,Sq^4$. As an algebra, $\A(2)$ has the following presentation
\begin{equation*}
\A(2) \cong \frac{\F[Sq^1,Sq^2,Sq^4]}{ \begin{pmatrix} Sq^1Sq^1, \\Sq^2Sq^2 + Sq^1 Sq^2 Sq^1, \\ Sq^1 Sq^4 +Sq^4 Sq^1 + Sq^2 Sq^1 Sq^2, \\ Sq^4 Sq^4 + Sq^2 Sq^4 Sq^2 + Sq^4 Sq^2 Sq^2 \end{pmatrix}}.
\end{equation*}

The coalgebra structure is given by the Cartan formulas. This Hopf algebra is dual to 
\begin{equation*}
\A(2)^{\ast} \cong \frac{\F[\xi_1,\xi_2,\xi_3]}{(\xi_1^8, \xi_2^4, \xi_3^2)},
\end{equation*}
together with the diagonal
\begin{eqnarray*}
\Delta(\xi_1) & = & \xi_1 \otimes 1 + 1 \otimes \xi_1, \\
\Delta(\xi_2) & = & \xi_2 \otimes 1 + \xi_1^2 \otimes \xi_1 + 1 \otimes \xi_2, \\
\Delta(\xi_3) & = & \xi_3 \otimes 1 + \xi_2^2 \otimes \xi_1 + \xi_1^4 \otimes \xi_2 + 1 \otimes \xi_3. 
\end{eqnarray*}

By Palmieri's work \cite[Theorem 1.3]{Pal97}, a stable $\A(2)$-module $M$ is $\otimes$-invertible if and only if, for all quasi-elementary Hopf subalgebras $E$ of $\A(2)$, the restriction of $M$ to $E$ is. Moreover, all the elementary sub-Hopf algebras of $\A(2)$ are in fact exterior Hopf subalgebras by \cite[Section 2.1.1]{Pal01}. Finally \textit{loc cit} recalls the classification the elementary Hopf subalgebras of $\A$ (the result is originally in \cite{Mar83}). We deduce the following result:
\begin{pro}
The maximal elementary Hopf subalgebras of $\A(2)$ are
\begin{eqnarray*}
E_1 & \cong & E(Q_0,Q_1,Q_2), \\
E_2 & \cong & E(Q_1, P_2^1, Q_2),
\end{eqnarray*}
where 
\begin{eqnarray*}
Q_0 & = & Sq^1, \\
Q_{i+1} & = & Q_i Sq^{2^{i+1}} +Sq^{2^{i+1}} Q_i,\\
P_2^1 & = & Sq^2 Sq^4 + Sq^4 Sq^2.
\end{eqnarray*}
\end{pro}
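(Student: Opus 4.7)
The plan is to apply Margolis's classification of elementary sub-Hopf algebras of $\A$ \cite{Mar83}, as summarized in \cite[Section 2.1.1]{Pal01}, and then restrict attention to those lying in $\A(2)$. Under Milnor duality, sub-Hopf algebras of $\A$ correspond to Hopf quotients of $\A^* = \F[\xi_1, \xi_2, \ldots]$ classified by profile functions, and $\A(2)$ has profile $h(1) = 3,\ h(2) = 2,\ h(3) = 1,\ h(t) = 0$ for $t \geq 4$. The Milnor basis elements $P_t^s$ dual to $\xi_t^{2^s}$ that lie in $\A(2)$ are exactly those with $t \in \{1, 2, 3\}$ and $s < h(t)$, namely
\[
Q_0 = P_1^0,\quad Sq^2 = P_1^1,\quad Sq^4 = P_1^2,\quad Q_1 = P_2^0,\quad P_2^1,\quad Q_2 = P_3^0.
\]

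The first step is to identify candidate generators for an exterior sub-Hopf algebra. The standard criterion that $(P_t^s)^2 = 0$ precisely when $s < t$ (a consequence of Milnor's multiplication formula) eliminates $Sq^2$ and $Sq^4$, leaving the four candidates $\{Q_0, Q_1, Q_2, P_2^1\}$. The $Q_i$ are $\A$-primitive, hence pairwise commute and square to zero, so by themselves they generate the exterior Hopf subalgebra $E_1$. A direct computation via Milnor duality yields
\[
\Delta(P_2^1) = P_2^1 \otimes 1 + 1 \otimes P_2^1 + Q_1 \otimes Q_1,
\]
so any Hopf subalgebra containing $P_2^1$ must also contain $Q_1$ for closure under the coproduct.

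The main computational step is verifying the commutation relations. Using the Adem relations, one checks that $P_2^1$ commutes with $Q_1$ and $Q_2$ but not with $Q_0$; explicitly,
\[
[Q_0, P_2^1] = Sq^7 + Sq^6 Sq^1 + Sq^5 Sq^2 + Sq^4 Sq^2 Sq^1 \neq 0.
\]
Since in characteristic $2$ an exterior algebra requires pairwise commuting generators, this obstruction forces any exterior generating subset of $\{Q_0, Q_1, Q_2, P_2^1\}$ to omit either $Q_0$ or $P_2^1$. The two maximal choices are therefore $\{Q_0, Q_1, Q_2\}$ and $\{Q_1, P_2^1, Q_2\}$, yielding $E_1$ and $E_2$ respectively; both are genuine exterior Hopf subalgebras of $\A(2)$. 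The main obstacle in the argument is the bookkeeping of the Adem-relation computation witnessing the non-commutation of $Q_0$ and $P_2^1$; everything else follows formally from the cited classification and the coproduct formula for $P_2^1$.
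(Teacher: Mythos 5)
Your argument is correct and takes essentially the same route as the paper: both deduce the statement from the Margolis--Palmieri classification of (elementary) sub-Hopf algebras of $\A$ via profile functions, restricted to $\A(2)$, with your verification of the square-zero criterion, the coproduct $\Delta(P_2^1)$, and the commutators simply filling in details the paper leaves to the citation. Note also that your admissible-basis expression for $[Q_0,P_2^1]$ is exactly $Q_2$, consistent with Remark~\ref{rk:notext}.
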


 \begin{de} \label{de:D2}
Let $D(2)$ be the Hopf subalgebra of $\A(2)$ generated by $Q_0,Q_1,Q_2$, and $P_2^1$. 
\end{de}
\begin{de}
Let $C(2)$ be the Hopf subalgebra of $\A(2)$ generated by $Sq^1$, $Sq^2$, $Q_1$, $P_2^1$ and $Q_2$.
\end{de}
The dual of these Hopf algebras have an easy presentation:
\begin{equation*} 
D(2)^{\ast} \cong \frac{\F[\xi_1,\xi_2,\xi_3]}{(\xi_1^2, \xi_2^4, \xi_3^2)},
\end{equation*}  
\begin{equation*}
C(2)^{\ast} \cong \frac{\F[\xi_1,\xi_2,\xi_3]}{(\xi_1^4, \xi_2^4, \xi_3^2)},
\end{equation*}
as quotient Hopf algebras of $\A(2)^{\ast}$.
\begin{rk} \label{rk:notext} Note that every generator $D(2)$ is either contained in $E_1$ or in $E_2$. However, $D(2)$ is not an exterior algebra as the commutator
\[ [Q_0, P_2^1] = Q_0P_2^1 + P_2^1Q_0 = Q_2\]
is nonzero. In fact, 
\[ D(2) = \frac{\F[Q_0, Q_1, P_2^1, Q_2] }{\langle Q_0^0, Q_1^2, Q_2^2,  [Q_0,P_2^1]= Q_2, (P_2^1)^2 \rangle}\]
is a presentation of $D(2)$.
\end{rk}

\section{An elementary case: the Picard group of $D(2)$}

In this section we compute the stable Picard group of $D(2)$. An immediate consequence of the study of exterior algebras in \cite{AM74} is that the Picard group of exterior Hopf algebras does not contain any exotic element (we give another argument here in Proposition \ref{pro:picexterior}, which relies on the analysis of \cite{CT05}). The Hopf algebra $D(2)$ being close to be an exterior Hopf algebra, the reader should expect that its Picard group does not contain exotic elements. Proposition \ref{pro:picD2} shows that it is indeed the case.
\begin{pro} \label{pro:picD2}
The group homomorphism
\begin{equation*}
\can : \Z \oplus \Z \rightarrow \Pic(D(2)),
\end{equation*}
is an isomorphism.
\end{pro}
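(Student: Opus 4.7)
The plan is to combine Palmieri's detection theorem with Proposition~\ref{pro:picexterior} on the Picard groups of exterior Hopf algebras, applied to the two maximal elementary sub-Hopf algebras $E_1$ and $E_2$ of $D(2)$, and then carry out a direct ``by hand'' analysis to lift triviality from the restrictions to $D(2)$ itself.

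For injectivity of $\can$, I would argue that distinct pairs $(n, m)$ give non-isomorphic suspensions: the identity stable map of $\un_{D(2)}^{n,m}$ corresponds to a canonical generator of $[\un_{D(2)}^{n,m}, \un_{D(2)}^{n,m}]^{D(2)} \cong \F$ sitting in a bidegree which depends faithfully on $(n, m)$, so two different $(n, m)$ cannot yield stably isomorphic modules. For surjectivity, let $M \in \Pic(D(2))$. By Palmieri's theorem (cited in the introduction), $M|_{E_i} \in \Pic(E_i)$ for $i = 1, 2$, and since each $E_i$ is exterior, Proposition~\ref{pro:picexterior} gives $M|_{E_i} \simeq \un_{E_i}^{n_i, m_i}$ for some integers.

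Next I would match the two restrictions. The internal shift $m_i$ equals the bottom internal degree of a reduced representative of $M$, which is independent of the restriction chosen, so $m_1 = m_2$. The cohomological index $n_i$ is similarly pinned down by the comparison map $\ext^{*,*}_{D(2)}(\F, M) \to \ext^{*,*}_{E_i}(\F, M|_{E_i})$ and must agree for $i = 1, 2$. After twisting $M$ by $\un_{D(2)}^{-n,-m}$ for this common pair $(n, m)$, the problem reduces to the case where $M|_{E_i} \simeq \un_{E_i}$ stably for both $i$.

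The main obstacle is the final step: showing that such an $M$, whose restrictions to both $E_1$ and $E_2$ are trivial, is stably isomorphic to $\un_{D(2)}$. I would exploit that $D(2)$ is finite-dimensional and free as a module over each $E_i$, so the Poincaré series of a reduced representative of $M$ is severely constrained from both sides; together with the finite-dimensionality of Picard representatives (Remark~\ref{rk:picardcompact}) these constraints should force $M^{red}$ to have Poincaré series $1$, hence $M^{red} \cong \F$ concentrated in degree zero and so $M \simeq \un_{D(2)}$. The delicate point is that neither restriction alone forces this conclusion: both elementary sub-Hopf algebras must be used simultaneously, and one must exploit that the only non-exterior relation in $D(2)$ is $[Q_0, P_2^1] = Q_2$ (Remark~\ref{rk:notext}), which links $E_1$ and $E_2$ and rules out the exotic candidates that each individual restriction alone would permit.
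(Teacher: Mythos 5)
Your reduction to the case where both restrictions are stably trivial follows the paper, but the step you yourself flag as the main obstacle is where the whole proof lives, and the argument you propose there does not work. A Poincar\'e-series count cannot force $M^{red}\cong\F$: if $M^{red}$ restricts to (unit)$\oplus$(free) over both $E_1=E(Q_0,Q_1,Q_2)$ and $E_2=E(Q_1,P_2^1,Q_2)$, the only numerical constraint is that its Poincar\'e series can be written as $P(t)=1+f_1(t)(1+t)(1+t^3)(1+t^7)=1+f_2(t)(1+t^3)(1+t^6)(1+t^7)$ with $f_1,f_2$ having nonnegative integer coefficients, and this has nontrivial solutions: taking $f_1=1+t^6$ and $f_2=1+t$ gives $P(t)=1+(1+t)(1+t^3)(1+t^6)(1+t^7)$, the series of $\F\oplus D(2)$. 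So finite dimensionality plus freeness of $D(2)$ over each $E_i$ does not pin $P$ down to $1$; one needs a module-theoretic argument, and your appeal to the relation $[Q_0,P_2^1]=Q_2$ is only a gesture, not a proof. The paper's proof supplies exactly the missing argument: pick $x\in M^{red}$ of minimal degree. If $Q_0,Q_1,Q_2,P_2^1$ all annihilate $x$, then $\F x\to M$ is split and induces an isomorphism on Margolis homology, hence is a stable equivalence. Otherwise some operation acts nontrivially on $x$; since $M|_{E_1}$ and $M|_{E_2}$ are stably trivial and $x$, being in lowest degree, cannot lie in the image of any positive-degree operation, $x$ generates a free $E_1$-submodule, then $Q_0x$ generates a free $E_2$-submodule, and $Q_2P_2^1Q_1Q_0x\neq 0$ shows $x$ generates a free $D(2)$-submodule; self-injectivity of $D(2)$ splits it off, contradicting reducedness. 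This bootstrapping between $E_1$ and $E_2$ is the content your sketch is missing.

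A secondary weak point is the matching of the two pairs $(n_1,m_1)$ and $(n_2,m_2)$. Your claim that $m_i$ equals the bottom internal degree of a $D(2)$-reduced representative is unjustified, since a $D(2)$-reduced module need not be $E_i$-reduced (its bottom class could sit in a free $E_i$-summand), and the remark about comparison maps on $\ext$ is not an argument. The paper sidesteps this cleanly: restrict further to the common exterior subalgebra $E(Q_1,Q_2)$, where Proposition~\ref{pro:picexterior} again applies, and use that the restriction maps $\Pic(E_1)\to\Pic(E(Q_1,Q_2))$ and $\Pic(E_2)\to\Pic(E(Q_1,Q_2))$ are isomorphisms $\Z\oplus\Z\to\Z\oplus\Z$, which forces the two pairs to agree before twisting. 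I would adopt that device rather than trying to read the indices off a reduced representative.
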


The proof is direct, and only relies on the analysis of the Picard group of elementary algebras.
\begin{pro} \label{pro:picexterior}
Let $n \geq 2$ and $E = E(x_1,x_2, \hdots x_n)$ be the exterior Hopf algebra generated by the elements $x_i$.
Then, $\can$ is an isomorphism.
\end{pro}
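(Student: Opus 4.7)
The plan is to compare $\Pic(E)$ with the group $T(G)$ of endotrivial modules over the elementary abelian $2$-group $G = (\Z/2)^n$, for which Dade's theorem (as recorded in \cite{CT05}) asserts $T(G) \cong \Z$ whenever $n \geq 2$, with generator the Heller shift $\Omega(\F)$. Since the $x_i$ are primitive, $E$ is isomorphic to the group algebra $\F[G]$ as an ungraded Hopf algebra, so forgetting the internal grading yields a symmetric monoidal functor from $\st(E)$ to the ungraded stable module category of $\F[G]$ (well-defined because $E$ is indecomposable as an ungraded module over itself, so any graded-reduced representative is also ungraded-reduced). On Picard groups this induces a homomorphism $\phi : \Pic(E) \to T(G)$, and because the underlying ungraded module of $\un_E^{1,0}$ is $\Omega(\F)$, the composite $\phi \circ \can$ is the first-coordinate projection $(k,m) \mapsto k$, hence surjective.

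With this in hand, the proof reduces to showing $\ker \phi = \can(\{0\} \oplus \Z)$. So take $M \in \ker \phi$ and let $M^{red}$ be its reduced representative. Then $M^{red}$ is ungraded-isomorphic to $\F$, hence one-dimensional; being a graded module concentrated in a single internal degree $m$, it coincides with $\un_E^{0,m} = \can(0,m)$. Now for an arbitrary $M \in \Pic(E)$ with $\phi(M) = \Omega^k(\F)$, the element $M \otimes \un_E^{-k,0}$ lies in $\ker \phi$ and is thus of the form $\can(0,m)$, giving $M \cong \can(k,m)$.

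For injectivity of $\can$, note that the composition with $\phi$ detects the first coordinate, so $\can(k,m) \cong \can(k',m')$ forces $k = k'$. Moreover, for $n \geq 2$ the minimal free resolution of $\F$ over $E$ has total rank strictly greater than $1$ in every positive homological degree, so $\un_E^{k,0}$ has ungraded dimension greater than $1$ whenever $k \neq 0$, while the pure internal shifts $\un_E^{0,m}$ are distinguished by the unique internal degree in which they are concentrated.

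The main obstacle is the kernel identification, as it hinges on the graded rigidity that any $E$-module whose ungraded underlying module is $\F$ is, stably, of the form $\un_E^{0,m}$. This is immediate here because a one-generator module over a connected graded algebra is determined by the placement of that generator up to overall shift; however this is precisely the kind of statement that could fail in the absence of the commutativity and primitivity that make $E$ identify with $\F[G]$, and is ultimately what makes the $n \geq 2$ hypothesis usable.
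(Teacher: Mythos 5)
Your proof takes essentially the same route as the paper: forget the grading to identify $E$ with the group algebra of $(\Z/2)^n$, invoke the Dade/Carlson--Th\'evenaz classification of endotrivial modules cited as \cite{CT05}, and then recover the internal shift from the graded structure. The paper's proof is exactly this reduction, only stated more tersely (it leaves the graded-rigidity step and the kernel analysis implicit), so your argument matches it in substance.
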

\begin{proof}
Let $M$ be a $\otimes$-invertible module. Equivalently, the morphism
\begin{equation*}
\hom(M,M) \rightarrow \un_E
\end{equation*}
is an isomorphism. The latter assertion depends only on the underlying ungraded module over the exterior \emph{algebra} $E$. But the exterior algebra is isomorphic to the group algebra of an elementary abelian $2$-group. By the classification of $\otimes$-invertibles modules in the group algebra case, due to Carlson and Thevenaz in \cite{CT05}, $M$ is stably isomorphic to $\un^{n,m}_E$. The result follows.
\end{proof}

\begin{proof}[Proof of Proposition \ref{pro:picD2}]
Let $[M]$ be an isomorphism class of stably $\otimes$-invertible modules. By Definition \ref{de:red} and subsequent construction, we can assume without loss of generality that $M$ is a reduced module (see Definition~\ref{de:red}). Moreover, by Remark \ref{rk:picardcompact}), $M$ is finite dimensional in this case.

Let $M_1$ and $M_2$ and $N$ be the restrictions of $M$ to $E_1$, $E_2$ and $E(Q_1,Q_2)$ respectively. There is a commutative diagram of abelian groups such that 
\begin{equation*} \label{piccom}
\xymatrix{ \Pic(D(2)) \ar[r] \ar[d] & \Pic(E_1) \ar[d] && [M] \ar@{|->}[r] \ar@{|->}[d]& [M_1]\ar@{|->}[d]\\
\Pic(E_2) \ar[r] & \Pic(E(Q_1,Q_2)) && [M_2] \ar@{|->}[r] & [N].}
\end{equation*} 
By Proposition \ref{pro:picexterior}, $\Pic(E_1)$, $\Pic(E_2)$ and $\Pic(E(Q_1,Q_2))$ are isomorphic to $\mathbb{Z} \times \mathbb{Z}$, and therefore the left vertical map and the bottom horizontal map in \eqref{piccom} are isomorphisms as shown below:  
\begin{equation*}
\xymatrix{ \Pic(D(2)) \ar[r] \ar[d] & \Pic(E_1) \ar[d]^{\cong} \\
\Pic(E_2) \ar[r]^{\cong} & \Z \oplus \Z.}
\end{equation*}
Consequently, without loss of generality, we can assume that both $M_1$ and $M_2$ are stably isomorphic to $\un_{E_1}$ and $\un_{E_2}$ respectively (if not, replace $M$ by $\Sigma^{n,m}M$ for suitable $m$ and $n$). Let $x \in M$ be an element of smallest degree (this makes sense since the modules are bounded by assumption).

Suppose that $Q_0,Q_1,Q_2$ and $P_2^1$ acts trivially on $x$, then the inclusion
\begin{equation*}
x\F \rightarrow M
\end{equation*}
is split. Since this inclusion induces an isomorphism in Margolis Homology (see \cite{Mar83}, it is an isomorphism, showing the result. 

Suppose now that one of the operations $Q_0,Q_1,Q_2$ and $P_2^1$ acts non-trivially on $x$. Let us assume this operation belongs to $E_1$ (a similar argument gives the result when it belongs to $E_2$). As $M_1$ is stably isomorphic to $\un_{E_1}$, $x$ must then belong to a free $E_1$-submodule. But $x$ cannot be in the target of any operation for degree reasons (it is in minimal degree and the operation have a positive degree). Thus there is an $E_1$-submodule $E_1x$ in $M_1$. In particular, an operation in $E_2$ acts non trivially on $Q_0x$. As $M_2$ is stably isomorphic to $\un_{E_2}$, $Q_0x$ must then belong to a free $E_2$-submodule. But again, $Q_0x$ cannot be in the target of any operation in $E_2$ for degree reasons, giving  $E_2 \cdot Q_0x \subset M_2$.
Consequently, $x$ generates a $D(2)$-free submodule, as $Q_2P_2^1Q_1Q_0x \neq 0$. Now, $D(2)$ is an injective submodule (recall that injective, projective and free modules are the same notion over a connective finite dimensional Hopf algebra), so $D(2)$ splits off. This is in contradiction with the hypothesis that $M$ is reduced.

\end{proof}

\section{Two descents to $\Pic(\A(2))$}

We will first review the algebraic descent spectral sequence, henceforth will be abbreviated to Alg-DSS, which is our main computational tool for our main result. This spectral sequence was developed in \cite{Ric4} and is inspired from the descent spectral sequence which appeared in \cite{MS14}. This section recollects the constructions of \textit{loc cit} and reformulate some of the results in our particular case. The key point of this section is Corollary \ref{lemma:obstructionline}, which gives a explicit condition under which the relative Picard groups are trivial. 

Let  $B \subset A$ be a conormal Hopf subalgebra such that algebra $(A\sur B)^{\ast}$ is exterior on one element $\tau$ in degree $|\tau| \geq 1$. This is exactly the situation we will encounter later, first when $A= C(2)$ and $B=D(2)$, and, second when $A= \A(2)$ and $B= C(2)$. Given an $A$-module $M_A$, let $\End_A(M_A)$ be the space $\hom_A(M_A,M_A)$, i.e. space of homomorphisms from $M_A$ to itself. In particular, $\pi_i(\End_A(M_A)) \cong [M_A,M_A]^A_{i,0}$. In \cite{Ric4}, the author considers the moduli space of stable $A$-modules, over a fixed $B$-module $M_B$. This is a topological space $\L_A(M_B)$ whose connected components are in one-to-one correspondence with the set of stable equivalence classes of $A$-modules $M_A$ such that $U M_A = M_B$. Let $M_A$ be the base point for $\L_A(M_B)$. Then one should note that 
\[ \Omega\L_A(M_B) \cong GL_1(\End_A(M_A)),\]
which means $$\pi_{i}(\L_A(M_B), M_A) = \pi_{i-1}(GL_1(\End_A(M_A)))$$ for $i \geq 1$. In particular, one can give an explicit description of the homotopy groups of $\L_A(M_B)$:
\begin{itemize}
\item $\pi_0(\L_A(M_B))$ is the set of stable equivalence classes of $A$-module whose restriction to $\st(B)$ is $M_B$,
\item If $\L_A(M_B)$ is nonempty, then $\pi_1(\L_A(M_B),M_A) \cong Aut_{\st(A)}(M_A)$ where the $A$-module $M_A$ is chosen to be the basepoint of $\L_A(M_B)$, and,
\item for $i \geq 2$, $\pi_i(\L_A(M_B),M) \cong [M,M]^A_{i-1,0}.$ 
\end{itemize}

\begin{rk}
In the case when $M_B$ is the unit  $\un_B$ in $\st(B)$,  $\L_A(\un_B)$ clearly is nonempty as $\un_A$ is a point in $\L_A(S_B)$, and,  $$\pi_0(\L_A(S_B) \cong \Pic(A,B).$$ The higher homotopy groups are $$\pi_i(\L_A(\un_B),\un_A) \cong [\un_A,\un_A]^A_{i-1,0}.$$ 
So far we have described the negative homotopy groups in terms of $Ext$-groups in \eqref{de:grading}. For the unit, $\un_A$ one can use Poincar\'e duality (see \cite[Section 4]{Ric4}) to conclude   
\begin{equation} \label{Poincare}
\pi_i(\L_A(\un_B),\un_A) \cong \ext_A^{i-2,-|A|}(\un_A, \un_A)
\end{equation}
where $|A|$ is the maximum internal degree among elements in $A$.
Note that the latter is zero, for degree reasons.
\end{rk}
%\begin{thm}[\emph{\cite[Corollary 8.11]{Ric4}}] \label{thm:algdss}
%The $E_1$-page of the spectral sequence for the space of lifts is
%\begin{equation} \label{SS:Algdss}
%E_1^{s,n} \cong N^n\ext_B^{s-2,-|B|}(M_B,(A\sur B^{\ast})^{\otimes \bullet}\otimes M_B) \Rightarrow \pi_{s-n}(\L_A(M_B)).
%\end{equation}
%Moreover, the first differential is induced by the product on $(A\sur B)^{\ast}$. In particular, the $E_2$-page only depends on the action of $A\sur B^{\ast}$ on $\ext_B^{s-2,-|B|}(\un,\un)$, and the $E_2$-page identifies with 
%\begin{equation} \label{SS:Lamb}
%E_2^{s,n} \cong \ext_{\pras{(A\sur B)^*}}^{n,t}(\un, \ext_B^{s-2,t-|B|}(S_B,S_B)) \Rightarrow \pi_{s-n}(\L_A(\un_B)).
%\end{equation}
%\end{thm}

The forgetful functor $\mod(A) \rightarrow \mod(B)$ stabilizes in a strong symmetric monoidal functor
\begin{equation*}
U : \st(A) \rightarrow \st(B).
\end{equation*}
Furthermore, the functor $U$ has a right adjoint $F_B(A,-)$. Using this adjunction, one can produces an Endomorphism spectral sequence (see \cite{Ric4}), henceforth will be denoted by EndSS,
\begin{equation} \label{SS:EndSS}
_{\End}E_2^{s,n} \cong \bigoplus_{t} \ext_{(A\sur B)^*}^{n,t}(\un_{(A\sur B)^*}, \ext_B^{s-1,t-|B|}(S_B,S_B)) \Rightarrow \pi_{s-n}(\End_A(\un_A))
\end{equation}
which computes the homotopy groups of $\End_A(\un_A).$ 
\begin{rk}
Note that, we have the following chain of isomorphisms:
\begin{eqnarray*}
[\un_B,\un_B]_{s,t}^B & = & [U\un_A,\un_B]_{s,t}^{B} \\
& \cong & [\un_A, \hom_B(A,\un_B)]_{s,t}^{A} \\
& \cong & [\un_A, \un_A \otimes (A\sur B)^{\ast} ]_{s,t}^A.
\end{eqnarray*}
The action of $(A\sur B)^{\ast}$ is induced by the action of $(A\sur B)^{\ast}$ on itself in \linebreak $\ext^{s,t}_A(\un_A, (A\sur B)^{\ast} \otimes \un_A)$.
\end{rk}
EndSS \eqref{SS:EndSS} must be compared to the Cartan-Eilenberg spectral sequence \linebreak (see~\cite{Ra86}). The Cartan Eilenberg spectral sequence is a tri-graded spectral sequence 
\[ E_2^{n, s,t} =\bigoplus_{t'+t'' = t}\ext_{(A \sur B)^*}^{s,t'}(\un_{(A\sur B)^*}, \ext_{B}^{s', t''}(\un_B, \un_B)) \Rightarrow \ext_{A}^{n+s', t}(\un_A, \un_A). \]
which can be extended to compute the bigraded stable homotopy in the stable category 
\begin{equation} \label{SS:CESS}
 _{\mathit{cess}}E_2^{n,s,t} = \bigoplus_{t'+t'' = t} \ext_{(A \sur B)^*}^{n,t'}(\un_{(A\sur B)^*},[\un_B,\un_B]_{s,t''}^B) \Rightarrow [\un_A, \un_A]^A_{s-n, t}
\end{equation}
which we will refer to as CESS. Using the fact that $\pi_i(\End(\un_A)) = [\un_A, \un_A]_{i,0}$ along we the Poincar\'e duality isomorphism, it is easy to see that EndSS \eqref{SS:EndSS} is the restriction of CESS \eqref{SS:CESS} to $t=0$. 

Similar to the EndSS \eqref{SS:EndSS}, we have a spectral sequence 
\begin{equation*}
E_1^{s,n}(\L_A(M_B)) \Rightarrow \pi_{s-n}(\L_A(M_B)),
\end{equation*}
 where $s$ is a homological degree, and $n$ the spectral sequence degree. We call this spectral sequence Algebraic descent spectral sequence or Alg-DSS. The first differential is induced by the product on $(A\sur B)^{\ast}$. Thus we have (see \cite[Corollary 8.11]{Ric4})
\begin{equation} \label{SS:Lamb}
_{\mathit{dss}}E_2^{s,n} \cong \bigoplus_{t} \ext_{(A\sur B)^*}^{n,t}(\un_{(A\sur B)^*}, \ext_B^{s-2,t-|B|}(\un_B,\un_B)) \Rightarrow \pi_{s-n}(\L_A(\un_B)).
\end{equation}

Taking a minimal resolution of $\un_B$, one can assume that 
 $${}_{\mathit{cess}}E_1^{*,s,t} \cong \F[\theta] \otimes \ext_{B}^{s,t - * \cdot |\tau| }(\un_B,\un_B) $$ 
where $|\theta| = (1,0, |\tau|)$, since $\tau$ is the Bockstein associated to $\theta$. Note that the $E_2$-pages of spectral sequences of \eqref{SS:EndSS} and \eqref{SS:Lamb} are isomorphic up to a shift, hence we may assume that ${_{\mathit{dss}}E_1^{s,n}} \cong {_{\End}E_1^{s-1,n}}$. As a result we get the following lemma.
\begin{lemma} \label{lemma:obstructionline}
The elements in $_{\mathit{dss}}E_1^{s,n}$  with $s-n =0$, are of the form $\theta^n \otimes y$ 
where \[y \in \ext_B^{n-2,n|\tau|-|B|}(\un_B,\un_B).\] 
\end{lemma}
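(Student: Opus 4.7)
The plan is to unwind the $E_1$-page of the algebraic descent spectral sequence using the minimal $(A\sur B)^{\ast}$-resolution of the unit, and then extract the line $s-n = 0$ by degree bookkeeping. The statement should be a direct consequence of the formulas recorded just above the lemma; no new tool is needed.

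First, I would invoke the identification $_{\mathit{dss}}E_1^{s,n} \cong {_{\End}E_1^{s-1,n}}$ stated immediately above the lemma, which reduces the task to describing the $E_1$-page of the endomorphism spectral sequence at position $(s-1, n)$. Since $(A\sur B)^{\ast}$ is exterior on a single class $\tau$ in internal degree $|\tau|$, its minimal free resolution of $\un_{(A\sur B)^{\ast}}$ has exactly one generator in each homological degree $n$, placed at internal degree $n|\tau|$; dually, this generator represents the class $\theta^n$, consistently with the Cartan-Eilenberg $E_1$-formula $_{\mathit{cess}}E_1^{*,s,t} \cong \F[\theta] \otimes \ext_B^{s, t - * \cdot |\tau|}(\un_B,\un_B)$.

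Next, I would match internal degrees in the EndSS formula
\[ _{\End}E_2^{s, n} \cong \bigoplus_t \ext_{(A\sur B)^{\ast}}^{n, t}(\un, \ext_B^{s-1, t-|B|}(\un_B, \un_B)). \]
Using the minimal resolution, the outer generator in column $n$ is $\theta^n$, which sits at internal degree $t = n|\tau|$; this forces the internal degree in the inner Ext to be $t - |B| = n|\tau| - |B|$. Consequently every element of $_{\End}E_1^{s-1, n}$ is of the form $\theta^n \otimes y$ with $y \in \ext_B^{s-2, n|\tau| - |B|}(\un_B, \un_B)$ after translating via $s \mapsto s-1$ back to the Alg-DSS, and setting $s = n$ yields exactly the claim.

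The only real subtlety is keeping track of the $-|B|$ Poincar\'e-duality shift on the inner Ext together with the shift $s \mapsto s - 1$ that distinguishes the Alg-DSS from the EndSS; neither point presents any genuine difficulty, so this is essentially a tri-degree bookkeeping argument on the explicit minimal resolution.
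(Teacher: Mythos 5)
Your argument is correct and is essentially the paper's own: the lemma follows by the same degree bookkeeping, using that $\ext_{(A\sur B)^{\ast}}$ is polynomial on $\theta$ with $|\theta|=(1,|\tau|)$ so that column $n$ is concentrated in internal degree $n|\tau|$, combined with the shift ${_{\mathit{dss}}E_1^{s,n}} \cong {_{\End}E_1^{s-1,n}}$ (equivalently, the displayed formula \eqref{SS:Lamb} with inner term $\ext_B^{s-2,t-|B|}$), and then setting $s=n$. No discrepancy with the paper's reasoning.
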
 
Many but not all, higher differentials in Alg-DSS \eqref{SS:Lamb} can be imported  from the stable Cartan Eilenberg spectral sequence  \eqref{SS:EndSS} using the following comparison tool that has been established in \cite[Section 5]{MS14} (also see~\cite[Proposition 7.6]{Ric4}).
\begin{pro}[{\cite{MS14a},\cite[5.2.4, Remark 5.2.5]{MS14}}] \label{pro:MScomparison}
 The differential of length $r$ originating at $(s-1,0,n)$ in ${_{\End}E_r^{s-1,n}}$ coincides with the differential originating at ${_{\mathit{dss}}E_r^{s,n}}$, if $s \geq r+1$. 
\end{pro}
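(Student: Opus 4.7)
The plan is to realize EndSS and Alg-DSS as the Bousfield--Kan spectral sequences of two naturally related cosimplicial diagrams, and then transport differentials via the pointed equivalence $\Omega_{\un_A}\L_A(\un_B) \simeq GL_1(\End_A(\un_A))$, with the bound $s\geq r+1$ arising from the range in which looping does not see the obstruction-theoretic $\pi_0$ contribution.

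First I would revisit the cobar construction $F_B(A^{\otimes(\bullet+1)},\un_A)$ associated to the adjunction $U\dashv F_B(A,-)$. Applying $\hom_A(\un_A,-)$ level-wise to this cosimplicial $A$-module gives a cosimplicial space $X^\bullet$ whose totalization is $\End_A(\un_A)$ and whose associated Bousfield--Kan spectral sequence is EndSS \eqref{SS:EndSS}. Applying instead the moduli-space functor $\L_A(-)$ level-wise gives a cosimplicial space $Y^\bullet$ whose totalization, based at $\un_A$, is $\L_A(\un_B)$, yielding Alg-DSS \eqref{SS:Lamb}. The inclusion of the identity component $GL_1(X^s)\hookrightarrow X^s$ combined with the canonical delooping lifts to a natural map $X^\bullet \to \Omega Y^\bullet$ of cosimplicial spaces; this is the source of the filtration shift already recorded as ${_{\mathit{dss}}E_r^{s,n}} \cong {_{\End}E_r^{s-1,n}}$ on the $E_2$-page.

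Second, I would identify the length-$r$ differentials on both sides as $r$-step secondary operations (Massey-product/Toda-bracket obstructions) read off from the Postnikov tower of the totalization. The map of cosimplicial spaces above carries such an operation on $Y^\bullet$ to the corresponding operation on $X^\bullet$. The subtlety is that, for $s\leq r$, the target filtration $s+r$ of a length-$r$ differential in Alg-DSS overlaps with the $\pi_0$-stratum of $\L_A(\un_B)$ (namely $\Pic(A,B)$ itself), where the loop-space model $GL_1\hookrightarrow \End$ need not be a $\pi_0$-isomorphism and so extra obstructions can appear; when $s\geq r+1$ the target lies strictly in the positive-homotopy range of $\L_A(\un_B)$, where $X^\bullet\to\Omega Y^\bullet$ is an equivalence on the relevant homotopy groups, and the two secondary operations are literally the same datum.

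The main obstacle will be the rigorous identification of these $r$-step obstructions on both sides within the algebraic stable category, together with verifying that the loop-space comparison is truly compatible with the Bousfield--Kan filtration in the claimed range. This is essentially the content of \cite[Proposition 5.2.4, Remark 5.2.5]{MS14}; I would transcribe that argument into the setting of $\st(A)$ by following the formalism already set up in \cite[Section 7]{Ric4}, where the cosimplicial resolution and the moduli-space functor $\L_A(-)$ are constructed in a form amenable to this comparison.
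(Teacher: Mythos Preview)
The paper does not supply its own proof of this proposition: it is stated as a citation of \cite{MS14a} and \cite[5.2.4, Remark 5.2.5]{MS14} (see also \cite[Proposition 7.6]{Ric4}), and the text moves on immediately to its consequences. So there is nothing in the paper to compare your argument against line by line.

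That said, your outline is faithful to the argument in the cited sources. The essential mechanism is exactly the one you describe: both spectral sequences arise as Bousfield--Kan spectral sequences of cosimplicial spaces built from the $U \dashv F_B(A,-)$ adjunction, the delooping $\Omega_{\un_A}\L_A(\un_B) \simeq GL_1(\End_A(\un_A))$ induces the shift ${_{\mathit{dss}}E_r^{s,n}} \cong {_{\End}E_r^{s-1,n}}$, and the inclusion $GL_1 \hookrightarrow \End$ is an equivalence on components of positive homotopical degree, so the comparison of differentials holds as long as neither source nor target of the $d_r$ touches the $\pi_0$-stratum. The numerical condition $s \geq r+1$ is precisely what keeps the target $(s-r, n+r)$ in the range $s-r \geq 1$ where this applies. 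Your identification of the ``main obstacle'' is also accurate: the real work is in \cite[5.2.4]{MS14}, and transporting it to $\st(A)$ via the framework of \cite{Ric4} is the intended route. There is no gap in your plan; it simply amounts to reproducing the cited proof rather than something new.
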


Note that if $\theta^q = 0$ in $\ext_{A}^{*,*}(\un_A, \un_A)$ then the $d_{q}$ is the last possible differential in CESS \eqref{SS:CESS} and hence in EndSS \eqref{SS:EndSS}. Since $\pi_{-1}(\End_A(\un_A))= 0$ means that all the elements in ${_{\End}E_{1}^{n+1,n}}$ are either zero or not present in the $E_{\infty}$-page of EndSS. Thus by Proposition~\ref{pro:MScomparison} all the in ${_{\mathit{dss}}E_{1}^{n,n}}$ for $n \geq q+1$ cannot be a nonzero permanent cycle. Hence the potential nonzero elements in $\pi_0(\L_A(M_B))$ are of the form 
\[ \theta^{n} \otimes y \in {_{\mathit{dss}}E_{1}^{n,n}}\]
where $n \leq q$ and $y$ has bidegree $(n-2, n|\tau| -B)$. Suppose $(q+1)|\tau| < |B|$, then $y$ is forced to be trivial whenever $n \leq q$. Thus we can conclude: 
\begin{lemma} \label{lem:false} If $B \subset A$ be a normal Hopf subalgebra of a connected Hopf algebra such that 
\begin{itemize}
\item $A \sur B^* = E(\tau)$,
\item $\theta^{q} = 0$ in $\ext_{A}^{*,*}(\un_A, \un_A)$, and, 
\item $(q+1)|\tau| < |B|$,
\end{itemize}
then $\Pic(A,B)= 0$.
\end{lemma}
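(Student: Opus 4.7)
The plan is to identify the relative Picard group with $\pi_0(\L_A(\un_B))$ (via the discussion preceding Lemma~\ref{lemma:obstructionline}) and to show that every entry on the line $s-n=0$ of the Alg-DSS~\eqref{SS:Lamb} vanishes at $E_\infty$. By Lemma~\ref{lemma:obstructionline}, such an entry has the form $\theta^n\otimes y$ with
\[
y\in\ext_B^{n-2,\,n|\tau|-|B|}(\un_B,\un_B),
\]
so it is enough to analyse these elements in two complementary ranges of $n$: the ``small'' range $n\leq q$, where $y$ is killed for degree reasons, and the ``large'' range $n\geq q+1$, where $\theta^n\otimes y$ is killed by a differential imported from the endomorphism spectral sequence.

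For $n\leq q$, the argument is a pure degree count. The hypothesis $(q+1)|\tau|<|B|$ gives $n|\tau|\leq q|\tau|<|B|$, so the internal degree $n|\tau|-|B|$ is strictly negative. Since $B$ is connected, $\ext_B^{s,t}(\un_B,\un_B)=0$ for all $t<0$, so $y=0$ and the corresponding entry is already zero on the $E_1$-page.

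For $n\geq q+1$, the vanishing has to come from differentials, and I would obtain it by transporting differentials from the endomorphism spectral sequence~\eqref{SS:EndSS} through Proposition~\ref{pro:MScomparison}. The key observation is that $\pi_{-1}(\End_A(\un_A))$ vanishes: indeed, $\pi_{-1}(\End_A(\un_A))=[\un_A,\un_A]^A_{-1,0}=\ext_A^{1,0}(\un_A,\un_A)$, which is zero by connectedness of $A$. Therefore every entry on the $s-n=-1$ line of EndSS must die at $E_\infty$. Because $\theta^q=0$ in $\ext_A^{*,*}(\un_A,\un_A)$, every such extinction occurs through a differential of length at most $q$. The comparison hypothesis $s\geq r+1$ in Proposition~\ref{pro:MScomparison}, applied with $s=n$ and $r\leq q$, is satisfied throughout $n\geq q+1$, so those differentials transport to the Alg-DSS and kill the corresponding $\theta^n\otimes y$ on the obstruction line.

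The main obstacle I expect is the bookkeeping across the three spectral sequences: one has to verify that the shift $_{\mathit{dss}}E_1^{s,n}\cong{_{\End}}E_1^{s-1,n}$ correctly aligns the obstruction line of the Alg-DSS ($s-n=0$) with the line of EndSS that computes $\pi_{-1}(\End_A(\un_A))$ ($s-n=-1$), and that the inequality $s\geq r+1$ in Proposition~\ref{pro:MScomparison} really holds in the full range $n\geq q+1$ for each differential length $r\leq q$. This is exactly why the precise form of the hypothesis $(q+1)|\tau|<|B|$ is chosen: it leaves a margin wide enough that the ranges $n\leq q$ and $n\geq q+1$ jointly exhaust every potential obstruction, whence $\Pic(A,B)=\pi_0(\L_A(\un_B))=0$.
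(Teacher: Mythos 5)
Your proposal is correct and follows essentially the same route as the paper: the same split into the range $n\leq q$, killed by the degree count $n|\tau|-|B|<0$ forced by $(q+1)|\tau|<|B|$, and the range $n\geq q+1$, killed by importing differentials from the EndSS (where $\pi_{-1}(\End_A(\un_A))=0$ and $\theta^q=0$ bounds the differential length by $q$) via Proposition~\ref{pro:MScomparison} with $s=n\geq q+1\geq r+1$. Your explicit identification $\pi_{-1}(\End_A(\un_A))\cong\ext_A^{1,0}(\un_A,\un_A)=0$ is a welcome detail the paper leaves implicit, but the argument is the same.
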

\begin{thm} \label{thm:picachu}
The morphism of groups
\begin{equation*}
\can : \Z \oplus \Z \rightarrow \Pic(\A(2)),
\end{equation*}
which sends $(n,m)$ to $\un^{n,m}_{\A(2)}$, is an isomorphism.
\end{thm}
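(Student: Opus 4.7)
My plan is to chain two applications of Corollary \ref{lem:false} on top of the base case Proposition \ref{pro:picD2}. Concretely, I aim to prove the vanishings $\Pic(C(2),D(2)) = 0$ and $\Pic(\A(2),C(2)) = 0$; combined with $\Pic(D(2)) \eq \Z \oplus \Z$ and the three-stage filtration displayed in the introduction, these force the restriction map $\Pic(\A(2)) \to \Pic(D(2))$ to be injective. Because $U : \st(A) \to \st(B)$ along an inclusion of Hopf algebras is an exact, strong monoidal triangulated functor, one has $U(\un_A^{n,m}) \steq \un_B^{n,m}$. Consequently the composite $\Z \oplus \Z \to \Pic(\A(2)) \to \Pic(D(2))$ agrees with the isomorphism supplied by Proposition \ref{pro:picD2}, which simultaneously forces $\can$ to be injective and the restriction map to be surjective, yielding the theorem.

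To invoke Corollary \ref{lem:false} at each stage, I would read its hypotheses off the dual presentations in Section \ref{sec:setup}. Both inclusions are normal since the corresponding dual maps are honest quotients of Hopf algebras, and the quotients are $(C(2)\sur D(2))^{\ast} \eq E(\xi_1^2)$ and $(\A(2)\sur C(2))^{\ast} \eq E(\xi_1^4)$, giving $|\tau| = 2$ and $|\tau| = 4$ respectively. Reading top-degree monomials $\xi_1 \xi_2^3 \xi_3$ and $\xi_1^3 \xi_2^3 \xi_3$ gives $|D(2)| = 17$ and $|C(2)| = 19$. The class $\theta$ in the algebraic descent spectral sequence is then identified in the first stage with $h_1 \in \ext^{1,2}_{C(2)}(\un,\un)$ and in the second with $h_2 \in \ext^{1,4}_{\A(2)}(\un,\un)$, both arising as the edge-map image of the Cartan--Eilenberg class dual to $\tau$.

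The technical heart of the argument, and what I expect to be the main obstacle, is producing nilpotency exponents $q$ satisfying both $\theta^q = 0$ and $(q+1)|\tau| < |B|$. For $C(2) \subset \A(2)$ this asks for $q$ with $h_2^q = 0$ in $\ext_{\A(2)}^{\ast,\ast}(\un,\un)$ and $(q+1)\cdot 4 < 19$; for $D(2) \subset C(2)$ it asks for $q$ with $h_1^q = 0$ in $\ext_{C(2)}^{\ast,\ast}(\un,\un)$ and $(q+1)\cdot 2 < 17$. I would extract these bounds from the well-understood structure of $\ext_{\A(2)}$, namely the $E_2$-page of the Adams spectral sequence for $tmf$, whose minimal resolution has been computed explicitly in the required range by Bruner and others, together with the smaller analogous calculation for $\ext_{C(2)}$; alternatively, a direct May spectral sequence argument starting from the dual presentations would give the bounds in a self-contained way. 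Once these nilpotency bounds are in hand, Corollary \ref{lem:false} immediately yields both vanishings of relative Picard groups, and the theorem follows from the diagram chase outlined in the first paragraph.
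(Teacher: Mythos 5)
Your proposal follows essentially the same route as the paper: the base case Proposition~\ref{pro:picD2}, then two applications of Lemma~\ref{lem:false} to the inclusions $D(2)\subset C(2)$ and $C(2)\subset\A(2)$, with $\tau=\xi_1^2,\xi_1^4$, $|D(2)|=17$, $|C(2)|=19$, and the nilpotency inputs being exactly $h_{11}^3=0$ in $\ext_{C(2)}$ and $h_{12}^3=0$ in $\ext_{\A(2)}$, which the paper supplies via the May spectral sequence computations of Propositions~\ref{pro:HC2} and~\ref{pro:HA2}, just as you anticipated. Your numerics correctly pin down that $q=3$ suffices (and, for the second descent, is forced), so the argument is complete once those appendix computations are cited.
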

\begin{proof} When $A= C(2)$ and $B=D(2)$, then $A\sur B^* = E(\xi_1^2)$ and $\xi_1^2$ is Bockstein to $h_{11}$. Since $h_{11}^{3} = 0$  in $\ext_{C(2)}^{*,*}(\un_{C(2)}, \un_{C(2)})$ (see Proposition~\ref{pro:HC2}), we see that the all the criterias of Lemma~\ref{lem:false} is satisfied. Hence, $\Pic(C(2),D(2)) = 0$. Since we have already established $\Pic(D(2))=0$ (Proposition~\ref{pro:picD2}), it follows that $\Pic(C(2)) = 0$. This completes our first descent. 

For the second descent with $A= \A(2)$ and $B=C(2)$, $A\sur B^*= E(\xi_1^4)$ and $\xi_1^4$ is Bockstein to $h_{12}$ which satisfies the relation  $h_{12}^3=0$ in $\ext_{\A(2)}^{*,*}(\un_{\A(2)}, \un_{\A(2)})$ (see Proposition~\ref{pro:HA2}). Easy to check that all the criterias of Lemma~\ref{lem:false} is satisfied, therefore $\Pic(\A(2), C(2)) =0$, hence $\Pic(\A(2)) \cong \Z \times \Z$. 
\end{proof}

%\begin{cor}[of Proposition \ref{pro:MScomparison} and Lemma \ref{lemma:obstructionline}] \label{cor:falseobstructions}
%Classes of the form $\theta^n \otimes y \in { _{\mathit{dss}}E_1^{n,n}}$, where $y \in  \ext_B^{n-2,n|\tau|-|B|}(\un_B,\un_B)$ is either zero or not present in the $E_{\infty}$-page if $n \geq r+1$.  
%\end{cor}
%\begin{proof}
%Since $\ext_A^{s-n-1,t-|A|}(\un,\un) = 0$ when $n = s$ and $t = |B| \leq -1$, the line $E^{-1,|B|-|A|}_{\infty}$ is zero. Consequently, if $E(\End)_1$ collapses at $E_r$, the elements in $\ext_B^{n-2,(n-1)|x|-|B|}(\un_B,\un_B))$, for $n > 2r$ do not survive to $E_{\infty}$ in the DSS.
%\end{proof}
%Note that if $\theta^q = 0$ in $\ext_{A}^{*,*}(\un_A, \un_A)$ then the $d_q$ is the last possible differential in CESS

%\begin{cor}
%\end{cor}

\newpage
\appendix
\section{Various extension groups}

The aim of this section is to describe the computations of  the bigraded $\ext$-groups of $\F$ over the Hopf algebras $D(2)$, $C(2)$ and $\A(2)$. The May spectral sequence is the most suitable tool for these sorts of computations which we briefly recall. It is convenient to work with the dual i.e. $D(2)^*$, $C(2)^*$ and $\A(2)^*$, for this purpose. 

May spectral sequence is obtained by assigning an additional filtration, called the \emph{May filtration}, to the Hopf algebra such that every element is primitive modulo the filtration. This filtration was introduced by J.P.May in his thesis \cite{May64}. Let the May filtration of $\xi_i$ is $2i-1$ following Ravenel~\cite{Ra86}. As a result we get a filtration on the cobar complexes for $D(2)^*$, $C(2)^*$ and $\A(2)^*$, which produces the trigraded May spectral sequence  
\[\begin{array}{lll}
E_1^{s,t,*} := \F[h_{10}, h_{20}, h_{21}, h_{30} ] &\Rightarrow& \ext_{C(2)}^{s,t}(\F, \F) \\
E_1^{s,t,*} :=\F[h_{10}, h_{11}, h_{20}, h_{21}, h_{30} ] &\Rightarrow& \ext_{C(2)}^{s,t}(\F, \F) \\
E_1^{s,t,*} :=  \F[h_{10}, h_{11}, h_{12}, h_{20}, h_{21}, h_{30} ] &\Rightarrow& \ext_{\A(2)}^{s,t}(\F, \F) 
\end{array} \] 
where  $h_{ij}$ corresponds to the class $[\xi_i^{2^j}]$ in the respective cobar complexes whose tridegree is $|h_{ij}| = (1,2^j(2^i-1),2i-1)$. The $d_1$-differentials in May SS comes from the coproducts structure and higher differentials can be computed using Nakamura's formula \cite{Nak72}, which is also described in \cite[Section~$2$]{BEM}. We display part of the $\ext$-groups in charts in $(t-s,s)$-coordinate system where each $\bullet$ represents an $\F$ vector space, vertical line represents multiplication by $h_{10}$, the slanted lines  of slope $\frac{1}{2}$ represent multiplication by $h_{11}$ and dotted lines of slope $\frac{1}{3}$ represent multiplication by $h_{12}$. 
\begin{pro} \label{pro:HD2}
The only non-trivial differentials in the May spectral sequence computing $\ext_{D(2)}(\F,\F)$ is $d_1(h_{30}) = h_{10}h_{21}$. Consequently,
\begin{equation*}
\ext^{s,t}_{D(2)}(\F,\F) \cong \frac{\F[h_{10},h_{20},h_{21},h_{30}^2]}{(h_{10}h_{21})}.
\end{equation*}
This $\F$-algebra is represented in figure \ref{fig:hD2}.
\end{pro}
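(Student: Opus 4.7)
The plan is to run the May spectral sequence as described in the excerpt, starting from the $E_1$-page $\F[h_{10},h_{20},h_{21},h_{30}]$ and showing it collapses at $E_2$ after a single family of $d_1$-differentials. First I would compute $d_1$ on each multiplicative generator by reading the reduced coproduct of $\xi_i^{2^j}$ in $D(2)^*$ from the formulas for $\A(2)^*$ displayed in the paper, modulo the defining relations $\xi_1^2, \xi_2^4, \xi_3^2$. The class $h_{10}$ is primitive in $\A(2)^*$; the cross term $\xi_1^2\otimes\xi_1$ in $\Delta(\xi_2)$ dies in $D(2)^*$, giving $d_1(h_{20})=0$; squaring in characteristic two then yields $d_1(h_{21})=0$. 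The crucial computation is the reduced coproduct of $\xi_3$, whose two cross terms $\xi_2^2\otimes\xi_1+\xi_1^4\otimes\xi_2$ collapse to a single term because $\xi_1^4=(\xi_1^2)^2=0$, giving $d_1(h_{30})=h_{10}h_{21}$.

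Next, I would extend $d_1$ as a derivation and compute its homology. This contributes the single relation $h_{10}h_{21}=0$, and forces $h_{30}$ to be replaced by $h_{30}^2$, since $h_{30}^2$ is the lowest power of $h_{30}$ that is automatically a $d_1$-cycle in characteristic two. This yields
\[E_2 \cong \frac{\F[h_{10},h_{20},h_{21},h_{30}^2]}{(h_{10}h_{21})},\]
which is the formula stated in the proposition.

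The remaining step, which I expect to be the main obstacle, is showing that no higher differentials exist. The generators $h_{10},h_{20},h_{21}$ are permanent cycles because they correspond to primitives of $D(2)^*$, so the only indecomposable generator on $E_2$ that could support a differential is $h_{30}^2$. I would apply Nakamura's formula (\cite{Nak72}, see also \cite[Section~2]{BEM}) to compute $d_r(h_{30}^2)$ from $d_1(h_{30})=h_{10}h_{21}$; each putative target should either vanish for tri-degree reasons on the May $E_r$-page or contain a factor of $h_{10}h_{21}$ and thus already be zero on $E_2$. Once this is verified, the Leibniz rule propagates the vanishing to all of $E_2$, collapsing the spectral sequence. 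As a final sanity check one could verify the Poincar\'e series of the resulting algebra against a direct low-degree computation of $\ext_{D(2)}(\F,\F)$ from a partial minimal resolution, since $D(2)$ is only $16$-dimensional. Absence of multiplicative extensions in passing from $E_\infty$ to $\ext$ is immediate because the claimed algebra structure on $E_\infty$ already has the right dimension and generators in the correct bidegrees.
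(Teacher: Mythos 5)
Your proposal is correct and follows essentially the same route as the paper, which proves these appendix propositions by exactly this method: read off $d_1$ from the (reduced) coproducts in the quotient $D(2)^{\ast}=\F[\xi_1,\xi_2,\xi_3]/(\xi_1^2,\xi_2^4,\xi_3^2)$, so that $h_{10},h_{20},h_{21}$ are primitive-hence-permanent cycles and $d_1(h_{30})=h_{10}h_{21}$, and then rule out higher differentials via Nakamura's squaring formulas together with degree considerations. Your remaining step does go through: the only monomial in the target bidegree $(s,t)=(3,14)$ of a differential on $h_{30}^2$ is $h_{10}h_{21}h_{30}$, which is not a $d_1$-cycle and so is already absent from $E_2$ (and Nakamura gives $d_2(h_{30}^2)=Sq^1(h_{10}h_{21})=0$ since $\xi_1^2=\xi_2^4=0$ here), so the spectral sequence collapses as claimed.
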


\begin{figure}[h]
\begin{tikzpicture}[scale=0.6]

\clip (-1,-1) rectangle ( 20.50,  6.50);
\draw[color=lightgray] (0,0) grid [step=4]  (20,6);

\node [below] at (0,0) {$0$};
\node [below] at (4,0) {$4$};
\node [below] at (8,0) {$8$};
\node [below] at (12,0) {$12$};
\node [below] at (16,0) {$16$};
\node [below] at (20,0) {$20$};
\node [left] at (20.5,0) {$t-s$};
\node [left] at (0,6.2) {$s$};

\node [left] at (0.2,1) {$h_{10}$};
\node [left] at (2,1) {$h_{20}$};
\node [left] at (5,1) {$h_{21}$};

\node [left] at (0,0) {$0$};
\node [left] at (0,4) {$4$};

\draw [fill] ( 0.00, 0.00) circle [radius=0.05];
\draw [fill] ( 0.00, 1.00) circle [radius=0.05];
\draw [fill] ( 2.00, 1.00) circle [radius=0.05];
\draw [fill] ( 5.00, 1.00) circle [radius=0.05];
\draw [fill] ( 0.00, 2.00) circle [radius=0.05];
\draw [fill] ( 2.00, 2.00) circle [radius=0.05];
\draw [fill] ( 4.00, 2.00) circle [radius=0.05];
\draw [fill] ( 7.00, 2.00) circle [radius=0.05];
\draw [fill] (10.00, 2.00) circle [radius=0.05];
\draw [fill] (12.00, 2.00) circle [radius=0.05];
\draw [fill] ( 0.00, 3.00) circle [radius=0.05];
\draw [fill] ( 2.00, 3.00) circle [radius=0.05];
\draw [fill] ( 4.00, 3.00) circle [radius=0.05];
\draw [fill] ( 6.00, 3.00) circle [radius=0.05];
\draw [fill] ( 9.00, 3.00) circle [radius=0.05];
\draw [fill] (12.07, 2.93) circle [radius=0.05];
\draw [fill] (11.93, 3.07) circle [radius=0.05];
\draw [fill] (14.00, 3.00) circle [radius=0.05];
\draw [fill] (15.00, 3.00) circle [radius=0.05];
\draw [fill] (17.00, 3.00) circle [radius=0.05];
\draw [fill] ( 0.00, 4.00) circle [radius=0.05];
\draw [fill] ( 2.00, 4.00) circle [radius=0.05];
\draw [fill] ( 4.00, 4.00) circle [radius=0.05];
\draw [fill] ( 6.00, 4.00) circle [radius=0.05];
\draw [fill] ( 8.00, 4.00) circle [radius=0.05];
\draw [fill] (11.00, 4.00) circle [radius=0.05];
\draw [fill] (12.00, 4.00) circle [radius=0.05];
\draw [fill] (14.07, 3.93) circle [radius=0.05];
\draw [fill] (13.93, 4.07) circle [radius=0.05];
\draw [fill] (16.00, 4.00) circle [radius=0.05];
\draw [fill] (17.00, 4.00) circle [radius=0.05];
\draw [fill] (19.00, 4.00) circle [radius=0.05];
\draw [fill] (20.00, 4.00) circle [radius=0.05];
\draw [fill] ( 0.00, 5.00) circle [radius=0.05];
\draw [fill] ( 2.00, 5.00) circle [radius=0.05];
\draw [fill] ( 4.00, 5.00) circle [radius=0.05];
\draw [fill] ( 6.00, 5.00) circle [radius=0.05];
\draw [fill] ( 8.00, 5.00) circle [radius=0.05];
\draw [fill] (10.00, 5.00) circle [radius=0.05];
\draw [fill] (12.00, 5.00) circle [radius=0.05];
\draw [fill] (13.00, 5.00) circle [radius=0.05];
\draw [fill] (14.00, 5.00) circle [radius=0.05];
\draw [fill] (16.07, 4.93) circle [radius=0.05];
\draw [fill] (15.93, 5.07) circle [radius=0.05];
\draw [fill] (18.00, 5.00) circle [radius=0.05];
\draw [fill] (19.00, 5.00) circle [radius=0.05];
\draw [fill] ( 0.00, 6.00) circle [radius=0.05];
\draw [fill] ( 2.00, 6.00) circle [radius=0.05];
\draw [fill] ( 4.00, 6.00) circle [radius=0.05];
\draw [fill] ( 6.00, 6.00) circle [radius=0.05];
\draw [fill] ( 8.00, 6.00) circle [radius=0.05];
\draw [fill] (10.00, 6.00) circle [radius=0.05];
\draw [fill] (12.07, 5.93) circle [radius=0.05];
\draw [fill] (11.93, 6.07) circle [radius=0.05];
\draw [fill] (14.00, 6.00) circle [radius=0.05];
\draw [fill] (15.00, 6.00) circle [radius=0.05];
\draw [fill] (16.00, 6.00) circle [radius=0.05];
\draw [fill] (18.07, 5.93) circle [radius=0.05];
\draw [fill] (17.93, 6.07) circle [radius=0.05];
\draw [fill] (20.00, 6.00) circle [radius=0.05];

\draw ( 0.00, 1.00) --( 0.00, 0.00);
\draw ( 0.00, 2.00) --( 0.00, 1.00);
\draw ( 2.00, 2.00) --( 2.00, 1.00);
\draw ( 0.00, 3.00) --( 0.00, 2.00);
\draw ( 2.00, 3.00) --( 2.00, 2.00);
\draw ( 4.00, 3.00) --( 4.00, 2.00);
\draw (11.93, 3.07) --(12.00, 2.00);
\draw ( 0.00, 4.00) --( 0.00, 3.00);
\draw ( 2.00, 4.00) --( 2.00, 3.00);
\draw ( 4.00, 4.00) --( 4.00, 3.00);
\draw ( 6.00, 4.00) --( 6.00, 3.00);
\draw (12.00, 4.00) --(11.93, 3.07);
\draw (13.93, 4.07) --(14.00, 3.00);
\draw ( 0.00, 5.00) --( 0.00, 4.00);
\draw ( 2.00, 5.00) --( 2.00, 4.00);
\draw ( 4.00, 5.00) --( 4.00, 4.00);
\draw ( 6.00, 5.00) --( 6.00, 4.00);
\draw ( 8.00, 5.00) --( 8.00, 4.00);
\draw (12.00, 5.00) --(12.00, 4.00);
\draw (14.00, 5.00) --(13.93, 4.07);
\draw (15.93, 5.07) --(16.00, 4.00);
\draw ( 0.00, 6.00) --( 0.00, 5.00);
\draw ( 2.00, 6.00) --( 2.00, 5.00);
\draw ( 4.00, 6.00) --( 4.00, 5.00);
\draw ( 6.00, 6.00) --( 6.00, 5.00);
\draw ( 8.00, 6.00) --( 8.00, 5.00);
\draw (10.00, 6.00) --(10.00, 5.00);
\draw (11.93, 6.07) --(12.00, 5.00);
\draw (14.00, 6.00) --(14.00, 5.00);
\draw (16.00, 6.00) --(15.93, 5.07);
\draw (17.93, 6.07) --(18.00, 5.00);
\draw ( 0.00, 7.00) --( 0.00, 6.00);
\draw ( 2.00, 7.00) --( 2.00, 6.00);
\draw ( 4.00, 7.00) --( 4.00, 6.00);
\draw ( 6.00, 7.00) --( 6.00, 6.00);
\draw ( 8.00, 7.00) --( 8.00, 6.00);
\draw (10.00, 7.00) --(10.00, 6.00);
\draw (12.07, 6.93) --(12.07, 5.93);
\draw (11.93, 7.07) --(11.93, 6.07);
\draw (13.93, 7.07) --(14.00, 6.00);
\draw (16.00, 7.00) --(16.00, 6.00);
\draw (18.00, 7.00) --(17.93, 6.07);
\draw (19.93, 7.07) --(20.00, 6.00);

\end{tikzpicture}

\caption{The algebra $\ext_{D(2)}^{s,t}(\F,\F)$. An element in degree $(s,t)$ is plotted in $(s,t-s)$.} \label{fig:hD2}
\end{figure}
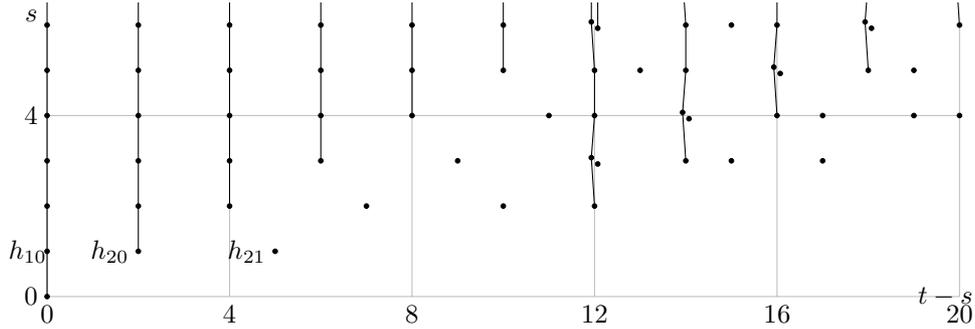

\begin{pro} \label{pro:HC2}
The only non-trivial differentials in the May spectral sequence computing $\ext_{C(2)}^{s,t}(\F,\F)$ are
\begin{itemize}
\item $d_1(h_{20}) = h_{10}h_{11}, $
\item $d_1(h_{30}) = h_{10}h_{21}, $
\item $d_2(h_{30}^2) = h_{11}h_{21}^2$, and,
\item $d_2(h_{20}^2) = h_{11}^3. $
\end{itemize}
This $\F$-algebra is represented in figure \ref{fig:hC2}.
\end{pro}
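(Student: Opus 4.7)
The plan is to run the May spectral sequence with $E_1$-page $\F[h_{10},h_{11},h_{20},h_{21},h_{30}]$ and successively identify all differentials until the sequence collapses, verifying exactly the four listed non-trivial differentials and no others.

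First I would compute the $d_1$ directly from the reduced coproduct $\bar\Delta$ on $C(2)^*$. Among the generators $\xi_1,\xi_1^2,\xi_2,\xi_2^2,\xi_3$ that give the classes $h_{10},h_{11},h_{20},h_{21},h_{30}$, only $\xi_2$ and $\xi_3$ are non-primitive, and
\[\bar\Delta(\xi_2) = \xi_1^2 \otimes \xi_1, \qquad \bar\Delta(\xi_3) = \xi_2^2 \otimes \xi_1,\]
where crucially the term $\xi_1^4 \otimes \xi_2$ appearing in $\bar\Delta(\xi_3)$ in $\A(2)^*$ vanishes here because $\xi_1^4 = 0$ in $C(2)^*$. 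Reading these off in the cobar complex modulo higher May filtration yields $d_1(h_{20}) = h_{10}h_{11}$ and $d_1(h_{30}) = h_{10}h_{21}$, while $h_{10}$, $h_{11}$, $h_{21}$ are $d_1$-cocycles. By the Leibniz rule, $h_{20}^2$ and $h_{30}^2$ also survive to $E_2$, and the $E_2$-page is the quotient of the polynomial algebra by $h_{10}h_{11}=0=h_{10}h_{21}$.

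Next, for the $d_2$ on the squares $h_{20}^2$ and $h_{30}^2$, I would invoke Nakamura's formula \cite{Nak72} as used in \cite[Section 2]{BEM}: this formula expresses the $d_2$ of a square in the May spectral sequence in terms of the $d_1$ of the generator together with a specific cup-one correction. Applied to $h_{20}$, with $d_1(h_{20}) = h_{10}h_{11}$, the formula produces $h_{11}^3$; applied to $h_{30}$, with $d_1(h_{30}) = h_{10}h_{21}$, it produces $h_{11}h_{21}^2$. As a cross-check, or as an alternative argument, one can construct explicit cobar-level lifts $[\xi_2|\xi_2]+c_2$ and $[\xi_3|\xi_3]+c_3$ of $h_{20}^2$ and $h_{30}^2$ whose cobar boundary has filtration drop exactly two, and read off the $d_2$ target directly from those boundaries.

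Finally, I would argue no further differentials can occur. After accounting for the $d_2$-kernels, the remaining generators at $E_3$ are $h_{10}, h_{11}, h_{21}, h_{20}^4, h_{30}^4$ together with low-filtration products; a filtration-and-bidegree count shows that the potential targets of $d_r$ for $r \geq 3$ on these surviving classes must be zero in the relevant tri-degrees, forcing collapse. The main obstacle is the $d_2$ step: both invoking Nakamura's formula correctly in this setting and confirming that these are indeed the first non-trivial differentials on the squares (as opposed to lower $d_2$-targets being present). Once this is done, the $\F$-algebra structure of $E_\infty$, and thus of $\ext_{C(2)}^{**}(\F,\F)$ shown in Figure~\ref{fig:hC2}, is assembled from the resulting multiplicative relations.
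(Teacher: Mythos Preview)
Your overall strategy---read off $d_1$ from the reduced diagonal in $C(2)^*$ and then obtain the $d_2$ on squares via Nakamura's operations---is precisely the method the paper invokes (the proposition is stated without a detailed proof; the paper only points to the coproduct and to Nakamura's formula in the paragraph preceding Proposition~\ref{pro:HD2}). Your $d_1$ computation is correct, including the observation that the $\xi_1^4\otimes\xi_2$ term in $\bar\Delta(\xi_3)$ vanishes in $C(2)^*$.

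There is, however, a genuine error in your passage to $E_2$. The $E_2$-page is \emph{not} ``the quotient of the polynomial algebra by $h_{10}h_{11}=0=h_{10}h_{21}$'': the elements $h_{20}$ and $h_{30}$ are not $d_1$-cycles and hence do not survive, and beyond the obvious cycles $h_{20}^2$, $h_{30}^2$ the $d_1$-homology contains a new indecomposable
\[
h_0(1)\;:=\;h_{20}h_{21}+h_{11}h_{30}
\]
in bidegree $(s,t)=(2,9)$. This class is explicitly labelled in Figure~\ref{fig:hC2}; it is a permanent cycle here (in the $\A(2)$ spectral sequence one has $d_2(h_0(1))=h_{10}h_{12}^2$, but $h_{12}$ is absent over $C(2)$). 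Your proposed list of $E_3$-generators ``$h_{10},h_{11},h_{21},h_{20}^4,h_{30}^4$ together with low-filtration products'' therefore omits $h_0(1)$, and also classes such as $h_{10}h_{20}^2$, which survives $d_2$ because $d_2(h_{10}h_{20}^2)=h_{10}h_{11}^3=0$ already at $E_2$. None of this affects the identification of the four listed differentials, but it does mean the $E_\infty$-page you would assemble is not the algebra displayed in Figure~\ref{fig:hC2}; the second half of the proposition would not follow from your argument as written.
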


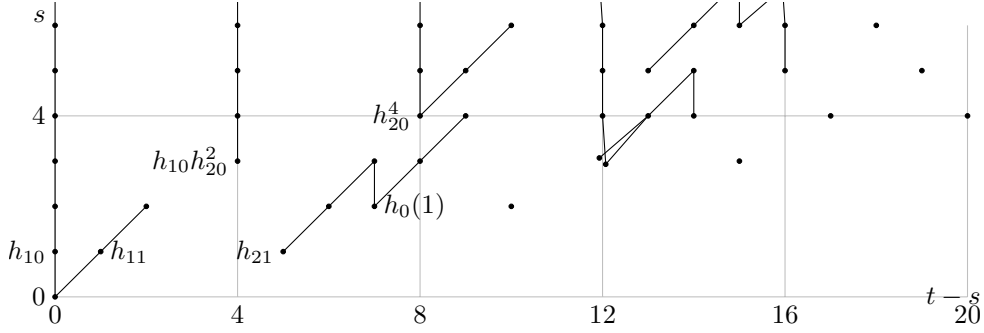
\begin{figure}

\begin{tikzpicture}[scale=0.6]

\clip (-1,-1) rectangle ( 20.50,  6.50);
\draw[color=lightgray] (0,0) grid [step=4]  (20,6);

\node [left] at (0,1) {$h_{10}$};
\node [right] at (1,1) {$h_{11}$};
\node [left] at (4,3) {$h_{10}h_{20}^2$};
\node [left] at (8,4) {$h_{20}^4$};
\node [right] at (7,2) {$h_0(1)$};
\node [left] at (5,1) {$h_{21}$};
\node [left] at (20.5,0) {$t-s$};
\node [left] at (0,6.2) {$s$};

\node [below] at (0,0) {$0$};
\node [below] at (4,0) {$4$};
\node [below] at (8,0) {$8$};
\node [below] at (12,0) {$12$};
\node [below] at (16,0) {$16$};
\node [below] at (20,0) {$20$};

\node [left] at (0,0) {$0$};
\node [left] at (0,4) {$4$};

\draw [fill] ( 0.00, 0.00) circle [radius=0.05];
\draw [fill] ( 0.00, 1.00) circle [radius=0.05];
\draw [fill] ( 1.00, 1.00) circle [radius=0.05];
\draw [fill] ( 5.00, 1.00) circle [radius=0.05];
\draw [fill] ( 0.00, 2.00) circle [radius=0.05];
\draw [fill] ( 2.00, 2.00) circle [radius=0.05];
\draw [fill] ( 6.00, 2.00) circle [radius=0.05];
\draw [fill] ( 7.00, 2.00) circle [radius=0.05];
\draw [fill] (10.00, 2.00) circle [radius=0.05];
\draw [fill] ( 0.00, 3.00) circle [radius=0.05];
\draw [fill] ( 4.00, 3.00) circle [radius=0.05];
\draw [fill] ( 7.00, 3.00) circle [radius=0.05];
\draw [fill] ( 8.00, 3.00) circle [radius=0.05];
\draw [fill] (12.07, 2.93) circle [radius=0.05];
\draw [fill] (11.93, 3.07) circle [radius=0.05];
\draw [fill] (15.00, 3.00) circle [radius=0.05];
\draw [fill] ( 0.00, 4.00) circle [radius=0.05];
\draw [fill] ( 4.00, 4.00) circle [radius=0.05];
\draw [fill] ( 8.00, 4.00) circle [radius=0.05];
\draw [fill] ( 9.00, 4.00) circle [radius=0.05];
\draw [fill] (12.00, 4.00) circle [radius=0.05];
\draw [fill] (13.00, 4.00) circle [radius=0.05];
\draw [fill] (14.00, 4.00) circle [radius=0.05];
\draw [fill] (17.00, 4.00) circle [radius=0.05];
\draw [fill] (20.00, 4.00) circle [radius=0.05];
\draw [fill] ( 0.00, 5.00) circle [radius=0.05];
\draw [fill] ( 4.00, 5.00) circle [radius=0.05];
\draw [fill] ( 8.00, 5.00) circle [radius=0.05];
\draw [fill] ( 9.00, 5.00) circle [radius=0.05];
\draw [fill] (12.00, 5.00) circle [radius=0.05];
\draw [fill] (13.00, 5.00) circle [radius=0.05];
\draw [fill] (14.00, 5.00) circle [radius=0.05];
\draw [fill] (16.00, 5.00) circle [radius=0.05];
\draw [fill] (19.00, 5.00) circle [radius=0.05];
\draw [fill] ( 0.00, 6.00) circle [radius=0.05];
\draw [fill] ( 4.00, 6.00) circle [radius=0.05];
\draw [fill] ( 8.00, 6.00) circle [radius=0.05];
\draw [fill] (10.00, 6.00) circle [radius=0.05];
\draw [fill] (12.00, 6.00) circle [radius=0.05];
\draw [fill] (14.00, 6.00) circle [radius=0.05];
\draw [fill] (15.00, 6.00) circle [radius=0.05];
\draw [fill] (16.00, 6.00) circle [radius=0.05];
\draw [fill] (18.00, 6.00) circle [radius=0.05];

\draw ( 0.00, 1.00) --( 0.00, 0.00);
\draw ( 1.00, 1.00) --( 0.00, 0.00);
\draw ( 0.00, 2.00) --( 0.00, 1.00);
\draw ( 2.00, 2.00) --( 1.00, 1.00);
\draw ( 6.00, 2.00) --( 5.00, 1.00);
\draw ( 0.00, 3.00) --( 0.00, 2.00);
\draw ( 7.00, 3.00) --( 6.00, 2.00);
\draw ( 7.00, 3.00) --( 7.00, 2.00);
\draw ( 8.00, 3.00) --( 7.00, 2.00);
\draw ( 0.00, 4.00) --( 0.00, 3.00);
\draw ( 4.00, 4.00) --( 4.00, 3.00);
\draw ( 9.00, 4.00) --( 8.00, 3.00);
\draw (12.00, 4.00) --(12.07, 2.93);
\draw (13.00, 4.00) --(12.07, 2.93);
\draw (13.00, 4.00) --(11.93, 3.07);
\draw ( 0.00, 5.00) --( 0.00, 4.00);
\draw ( 4.00, 5.00) --( 4.00, 4.00);
\draw ( 8.00, 5.00) --( 8.00, 4.00);
\draw ( 9.00, 5.00) --( 8.00, 4.00);
\draw (12.00, 5.00) --(12.00, 4.00);
\draw (14.00, 5.00) --(13.00, 4.00);
\draw (14.00, 5.00) --(14.00, 4.00);
\draw ( 0.00, 6.00) --( 0.00, 5.00);
\draw ( 4.00, 6.00) --( 4.00, 5.00);
\draw ( 8.00, 6.00) --( 8.00, 5.00);
\draw (10.00, 6.00) --( 9.00, 5.00);
\draw (12.00, 6.00) --(12.00, 5.00);
\draw (14.00, 6.00) --(13.00, 5.00);
\draw (16.00, 6.00) --(16.00, 5.00);
\draw ( 0.00, 7.00) --( 0.00, 6.00);
\draw ( 4.00, 7.00) --( 4.00, 6.00);
\draw ( 8.00, 7.00) --( 8.00, 6.00);
\draw (11.93, 7.07) --(12.00, 6.00);
\draw (15.00, 7.00) --(14.00, 6.00);
\draw (15.00, 7.00) --(15.00, 6.00);
\draw (16.07, 6.93) --(15.00, 6.00);
\draw (15.93, 7.07) --(16.00, 6.00);

\end{tikzpicture}
\caption{The algebra $\ext_{C(2)}^{s,t}(\F,\F)$. An element in degree $(s,t)$ is plotted in $(s,t-s)$. The element denoted $h_0(1)$ is $h_{20}h_{21} + h_{11}h_{30}$.} \label{fig:hC2}
\end{figure}

%\begin{proof}
%Using the filtration of $C(2)$ by powers of the augmentation ideal yields a May spectral sequence, whose $E_1$-page is
%\begin{equation*}
%E_1 \cong \F[v_0,\eta,v_1,v_2,w_2] \Rightarrow \ext_{C(2)}^{s,t}(\un,\un).
%\end{equation*}
%The non-trivial differentials are consequences of
%$$d_1(v_1) = v_0\eta, $$
%$$d_1(v_2) = v_0w_2, $$
%$$d_2(v_2^2) = \eta w_2^2, $$
%$$d_2(v_1^2) = \eta^3. $$
%\end{proof}
\begin{pro} \label{pro:HA2}
The only non-trivial differentials in the May spectral sequence computing $\ext_{\A(2)}^{s,t}(\F,\F)$ are
\begin{itemize}
\item $d_1(h_{20}) = h_{10}h_{11}, $
\item $d_1(h_{30}) = h_{10}h_{21} + h_{20}h_{12}, $
\item $d_1(h_{21}) = h_{11}h_{12},$
\item $d_2(h_{20}^2) = h_{11}^3+h_{10}^2h_{12}, $
\item $d_2(h_{21}^2) = h_{12}^3, $
\item $d_2(h_{30}^2) = h_{11} h_{21}^2$,
\item $d_2(h_{20}h_{21} + h_{11}h_{30}) = h_{10}h_{12}^2$, and, 
\item $d_4(h_{30}^4)= h_{12} h_{21}^4$.
\end{itemize}
This $\F$-algebra is represented in figure \ref{fig:HA2}.
\end{pro}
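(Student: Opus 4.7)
\medskip

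The plan is to run the May spectral sequence for $\A(2)^\ast$ with the standard May filtration $|\xi_i|_{\mathrm{May}}=2i-1$, starting from the $E_1$-page
\[ E_1^{\ast,\ast,\ast} = \F[h_{10},h_{11},h_{12},h_{20},h_{21},h_{30}], \]
and to extract each claimed differential from the coproduct of $\A(2)^\ast$ and from a repeated application of Nakamura's formula (as formulated in \cite{Nak72} and recalled in \cite[Section 2]{BEM}). Finally I would check that no other non-trivial differentials can occur, by degree considerations and by comparison with the already-computed $\ext$-algebras of $D(2)$ and $C(2)$ (Propositions \ref{pro:HD2} and \ref{pro:HC2}).

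First I would read off the $d_1$'s directly from the reduced coproduct of $\A(2)^\ast$. Because
\begin{align*}
\Delta(\xi_2) &= \xi_2\otimes 1 + \xi_1^2\otimes \xi_1 + 1\otimes \xi_2,\\
\Delta(\xi_3) &= \xi_3\otimes 1 + \xi_2^2\otimes \xi_1 + \xi_1^4\otimes \xi_2 + 1\otimes \xi_3,
\end{align*}
the May $d_1$ on $[\xi_2^{2^j}]$ and $[\xi_3^{2^j}]$ is obtained by squaring the diagonal and keeping only the terms which drop May filtration by exactly one. This immediately gives $d_1(h_{20})=h_{10}h_{11}$, $d_1(h_{21})=h_{11}h_{12}$, and $d_1(h_{30}) = h_{10}h_{21}+h_{20}h_{12}$, while $h_{10},h_{11},h_{12}$ are permanent cycles (they correspond to primitive elements). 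Collecting generators and relations on the $E_2$-page then produces a small list of potential May cycles on which higher differentials must be computed.

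The second step is to apply Nakamura's formula to the $d_1$-cycles $h_{20}^2$, $h_{21}^2$, $h_{30}^2$ and to the non-square $d_1$-cycle $h_{20}h_{21}+h_{11}h_{30}$. Nakamura's rule essentially says that if $d_r(x)=y$ then, modulo lower-filtration corrections, $d_{2r}(x^2) = \mathrm{Sq}^{|x|}(y)$ where $\mathrm{Sq}$ denotes the Steenrod operations on $\ext$; applied to the three squares it yields
\[ d_2(h_{20}^2)=h_{11}^3 + h_{10}^2 h_{12},\qquad d_2(h_{21}^2)=h_{12}^3,\qquad d_2(h_{30}^2)=h_{11}h_{21}^2. \]
A direct cobar computation (adding a representative for $h_{20}h_{21}+h_{11}h_{30}$ and squaring the coboundary) gives the remaining $d_2(h_{20}h_{21}+h_{11}h_{30}) = h_{10}h_{12}^2$. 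One then iterates Nakamura once more on $h_{30}^4=(h_{30}^2)^2$: the square of the surviving part of $d_2(h_{30}^2)$ is $h_{12}h_{21}^4$ (after using the already-known relations $h_{11}h_{21}=h_{11}h_{12}=0$ on the $E_3$-page), which gives the $d_4$.

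Finally I would argue that no further differentials are possible. The total degrees $(s,t)$ forbid most candidates because the surviving monomials and the candidate target monomials land in disjoint bidegrees. The remaining ambiguities are controlled by naturality: restriction along $D(2)\subset C(2)\subset \A(2)$ sends May generators to May generators, so any extra differential in $\A(2)$ whose target is detected in $C(2)$ or $D(2)$ would contradict Propositions \ref{pro:HC2} and \ref{pro:HD2}. The main obstacle here will be bookkeeping: correctly identifying the representative cobar cochains so that Nakamura's formula produces the precise mixed terms $h_{10}^2 h_{12}$ in $d_2(h_{20}^2)$ and $h_{12}h_{21}^4$ in $d_4(h_{30}^4)$, and ruling out spurious differentials on classes like $h_{20}h_{30}$, $h_{21}h_{30}$ and $h_{30}^2$-multiples using a systematic internal-degree check against the chart in figure \ref{fig:HA2}.
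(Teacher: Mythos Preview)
Your outline is essentially the paper's own approach: the appendix states that the $d_1$'s come from the coproduct and the higher differentials from Nakamura's formula \cite{Nak72,BEM}, and otherwise offers no detailed proof. Two small points of divergence are worth noting. First, the paper explicitly flags that $d_2(h_{20}h_{21}+h_{11}h_{30})=h_{10}h_{12}^2$ is \emph{not} a consequence of Nakamura's operations (the source is not a square), and defers instead to \cite[Proposition~4.2]{Tan70}; your phrase ``squaring the coboundary'' is misleading here, though a genuine cobar computation of the type Tangora carries out does work. Second, for the check that no further differentials occur, the paper appeals to the known description of $\ext_{\A(2)}^{*,*}(\F,\F)$ in \cite{SI67} rather than to naturality under restriction to $C(2)$ and $D(2)$; your restriction argument only constrains differentials whose targets survive in the subalgebra, so it would need to be supplemented by the degree argument you mention to be complete.
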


\begin{rk}
Note that the differential  $d_2(h_{20}h_{21} + h_{11}h_{30}) = h_{10}h_{12}^2$ is not a straightforward consequence of Nakamura's operations. The interested reader is reffered to \cite[Proposition 4.2]{Tan70} for the computation. One can compare the previous result to the list of relations in $\ext^{*,*}_{\A(2)}(\F,\F)$ given in \cite{SI67}.
\end{rk}

%\begin{cor} \label{cor:collapseD2C2}
%The Cartan-Eilenberg spectral sequence $E(\End)^{s,t,n} \Rightarrow \ext_{C(2)}^{s-n-1,t-19}(\un,\un)$ collapses at $E_3$ or before.
%\end{cor}
%
%\begin{proof}
%This is a consequence of the formula $\eta^{3} = 0$ in $\ext_{C(2)}^{s,t}(\un,\un)$.
%\end{proof}
%
%
%\begin{cor} \label{cor:collapseC2A2}
%The Cartan-Eilenberg spectral sequence $E(\End)^{s,t,n} \Rightarrow \ext_{\A(2)}^{s-n-1,t-23}(\un,\un)$ collapses at $E_3$ or before.
%\end{cor}
%
%\begin{proof}
%This is a consequence of the formula $\nu^{3} = 0$ in $\ext_{\A(2)}^{s,t}(\un,\un)$.
%\end{proof}

\begin{figure}[h]

\begin{tikzpicture}[scale=0.6]

\clip (-1,-1) rectangle ( 20.50,  6.50);
\draw[color=lightgray] (0,0) grid [step=4]  (20,6);

\node [left] at (20.5,0) {$t-s$};
\node [left] at (0,6.2) {$s$};
\node [left] at (0.2,1) {$h_{10}$};
\node [right] at (1,1) {$h_{11}$};
\node [right] at (3,1) {$h_{12}$};
\node [left] at (8,4) {$h_{20}^4$};
\node [right] at (8,3) {$h_{11}h_0(1)$};

\node [below] at (0,0) {$0$};
\node [below] at (4,0) {$4$};
\node [below] at (8,0) {$8$};
\node [below] at (12,0) {$12$};
\node [below] at (16,0) {$16$};
\node [below] at (20,0) {$20$};

\node [left] at (0,0) {$0$};
\node [left] at (0,4) {$4$};

\draw [fill] ( 0.00, 0.00) circle [radius=0.05];
\draw [fill] ( 0.00, 1.00) circle [radius=0.05];
\draw [fill] ( 1.00, 1.00) circle [radius=0.05];
\draw [fill] ( 3.00, 1.00) circle [radius=0.05];
\draw [fill] ( 0.00, 2.00) circle [radius=0.05];
\draw [fill] ( 2.00, 2.00) circle [radius=0.05];
\draw [fill] ( 3.00, 2.00) circle [radius=0.05];
\draw [fill] ( 6.00, 2.00) circle [radius=0.05];
\draw [fill] ( 0.00, 3.00) circle [radius=0.05];
\draw [fill] ( 3.00, 3.00) circle [radius=0.05];
\draw [fill] ( 8.00, 3.00) circle [radius=0.05];
\draw [fill] (12.00, 3.00) circle [radius=0.05];
\draw [fill] (15.00, 3.00) circle [radius=0.05];
\draw [fill] ( 0.00, 4.00) circle [radius=0.05];
\draw [fill] ( 8.00, 4.00) circle [radius=0.05];
\draw [fill] ( 9.00, 4.00) circle [radius=0.05];
\draw [fill] (12.00, 4.00) circle [radius=0.05];
\draw [fill] (14.00, 4.00) circle [radius=0.05];
\draw [fill] (15.00, 4.00) circle [radius=0.05];
\draw [fill] (17.00, 4.00) circle [radius=0.05];
\draw [fill] (18.00, 4.00) circle [radius=0.05];
\draw [fill] (20.00, 4.00) circle [radius=0.05];
\draw [fill] ( 0.00, 5.00) circle [radius=0.05];
\draw [fill] ( 8.00, 5.00) circle [radius=0.05];
\draw [fill] ( 9.00, 5.00) circle [radius=0.05];
\draw [fill] (11.00, 5.00) circle [radius=0.05];
\draw [fill] (12.00, 5.00) circle [radius=0.05];
\draw [fill] (14.00, 5.00) circle [radius=0.05];
\draw [fill] (15.00, 5.00) circle [radius=0.05];
\draw [fill] (17.00, 5.00) circle [radius=0.05];
\draw [fill] (18.00, 5.00) circle [radius=0.05];
\draw [fill] (20.00, 5.00) circle [radius=0.05];
\draw [fill] ( 0.00, 6.00) circle [radius=0.05];
\draw [fill] ( 8.00, 6.00) circle [radius=0.05];
\draw [fill] (10.00, 6.00) circle [radius=0.05];
\draw [fill] (11.00, 6.00) circle [radius=0.05];
\draw [fill] (12.00, 6.00) circle [radius=0.05];
\draw [fill] (14.00, 6.00) circle [radius=0.05];
\draw [fill] (17.00, 6.00) circle [radius=0.05];
\draw [fill] (20.00, 6.00) circle [radius=0.05];

\draw ( 0.00, 1.00) --( 0.00, 0.00);
\draw ( 1.00, 1.00) --( 0.00, 0.00);
\draw [dashed]  ( 3.00, 1.00) --( 0.00, 0.00);
\draw ( 0.00, 2.00) --( 0.00, 1.00);
\draw ( 2.00, 2.00) --( 1.00, 1.00);
\draw [dashed]  ( 3.00, 2.00) --( 0.00, 1.00);
\draw ( 3.00, 2.00) --( 3.00, 1.00);
\draw [dashed]  ( 6.00, 2.00) --( 3.00, 1.00);
\draw ( 0.00, 3.00) --( 0.00, 2.00);
\draw [dashed]  ( 3.00, 3.00) --( 0.00, 2.00);
\draw ( 3.00, 3.00) --( 2.00, 2.00);
\draw ( 3.00, 3.00) --( 3.00, 2.00);
\draw ( 0.00, 4.00) --( 0.00, 3.00);
\draw ( 9.00, 4.00) --( 8.00, 3.00);
\draw (12.00, 4.00) --(12.00, 3.00);
\draw [dashed]  (15.00, 4.00) --(12.00, 3.00);
\draw (15.00, 4.00) --(15.00, 3.00);
\draw [dashed]  (18.00, 4.00) --(15.00, 3.00);
\draw ( 0.00, 5.00) --( 0.00, 4.00);
\draw ( 8.00, 5.00) --( 8.00, 4.00);
\draw ( 9.00, 5.00) --( 8.00, 4.00);
\draw [dashed]  (11.00, 5.00) --( 8.00, 4.00);
\draw (12.00, 5.00) --(12.00, 4.00);
\draw (14.00, 5.00) --(14.00, 4.00);
\draw [dashed]  (15.00, 5.00) --(12.00, 4.00);
\draw (15.00, 5.00) --(14.00, 4.00);
\draw (15.00, 5.00) --(15.00, 4.00);
\draw [dashed]  (17.00, 5.00) --(14.00, 4.00);
\draw (17.00, 5.00) --(17.00, 4.00);
\draw [dashed]  (18.00, 5.00) --(15.00, 4.00);
\draw (18.00, 5.00) --(17.00, 4.00);
\draw (18.00, 5.00) --(18.00, 4.00);
\draw [dashed]  (20.00, 5.00) --(17.00, 4.00);
\draw (20.00, 5.00) --(20.00, 4.00);
\draw [dashed]  (21.00, 5.00) --(18.00, 4.00);
\draw (21.00, 5.00) --(20.00, 4.00);
\draw ( 0.00, 6.00) --( 0.00, 5.00);
\draw ( 8.00, 6.00) --( 8.00, 5.00);
\draw (10.00, 6.00) --( 9.00, 5.00);
\draw [dashed]  (11.00, 6.00) --( 8.00, 5.00);
\draw (11.00, 6.00) --(11.00, 5.00);
\draw (12.00, 6.00) --(12.00, 5.00);
\draw [dashed]  (14.00, 6.00) --(11.00, 5.00);
\draw (14.00, 6.00) --(14.00, 5.00);
\draw [dashed]  (17.00, 6.00) --(14.00, 5.00);
\draw (17.00, 6.00) --(17.00, 5.00);
\draw [dashed]  (20.00, 6.00) --(17.00, 5.00);
\draw (20.00, 6.00) --(20.00, 5.00);
\draw ( 0.00, 7.00) --( 0.00, 6.00);
\draw ( 8.00, 7.00) --( 8.00, 6.00);
\draw [dashed]  (11.00, 7.00) --( 8.00, 6.00);
\draw (11.00, 7.00) --(10.00, 6.00);
\draw (11.00, 7.00) --(11.00, 6.00);
\draw (12.00, 7.00) --(12.00, 6.00);

\end{tikzpicture}

\caption{The algebra $\ext_{\A(2)}^{s,t}(\F,\F)$. An element in degree $(s,t)$ is plotted in $(s,t-s)$.} \label{fig:HA2}
\end{figure}
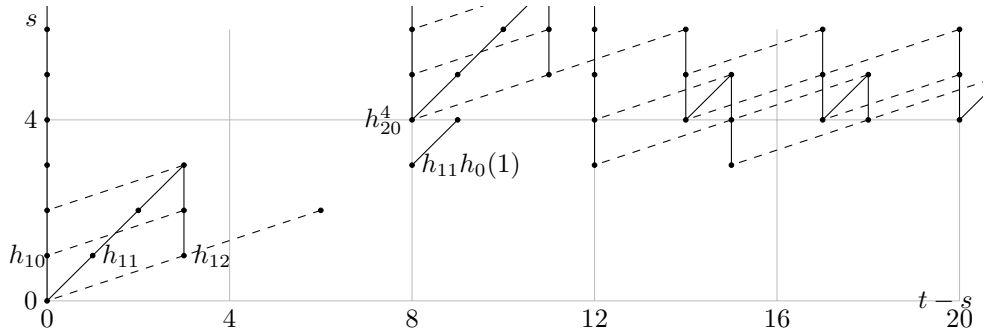

\bibliographystyle{alpha}
\bibliography{biblio}

\end{document}